\numberwithin{equation}{section}
\definecolor{violet}{rgb}{0.580,0.,0.827}
\newcommand{\ols}[1]{\mskip.5\thinmuskip\overline{\mskip-.5\thinmuskip {#1} \mskip-.5\thinmuskip}\mskip.5\thinmuskip} 
\newcommand\dD{\mathrm{d}}
\def\eps{\varepsilon }
\newcommand{\sign}{{\text{\rm sgn }}}
\newcommand\br{\begin{remark}}
\newcommand\er{\end{remark}}
\newcommand\bp{\begin{pmatrix}}
\newcommand\ep{\end{pmatrix}}
\newcommand{\be}{\begin{equation}}
\newcommand{\ee}{\end{equation}}
\newcommand\ba{\begin{equation}\begin{aligned}}
\newcommand\ea{\end{aligned}\end{equation}}
\newcommand\ds{\displaystyle}
\newcommand{\beg}{\begin{example}}
\newcommand{\eeg}{\end{exaplem}}
\newcommand{\bpr}{\begin{proposition}}
\newcommand{\epr}{\end{proposition}}
\newcommand{\bt}{\begin{theorem}}
\newcommand{\et}{\end{theorem}}
\newcommand{\bc}{\begin{corollary}}
\newcommand{\ec}{\end{corollary}}
\newcommand{\bl}{\begin{lemma}}
\newcommand{\el}{\end{lemma}}
\newcommand{\bd}{\begin{definition}}
\newcommand{\ed}{\end{definition}}
\newcommand{\brs}{\begin{remarks}}
\newcommand{\ers}{\end{remarks}}
\newtheorem{theorem}{Theorem}[section]
\newtheorem{proposition}[theorem]{Proposition}
\newtheorem{corollary}[theorem]{Corollary}
\newtheorem{lemma}[theorem]{Lemma}
\newtheorem{remark}[theorem]{Remark}
\newtheorem{definition}[theorem]{Definition}
\newtheorem{example}[theorem]{Example}
\newcommand{\R}{{\mathbb R}}
\newcommand\bx{{\bm x}}
\newcommand{\by}{\bm y}
\newcommand{\bu}{\bm u}
\newcommand\cA{{\mathcal A}}
\newcommand\cB{{\mathcal B}}
\newcommand\cE{{\mathcal E}}
\newcommand\cU{{\mathcal U}}
\newcommand\cV{{\mathcal V}}
\newcommand\cW{{\mathcal W}}
\newcommand\scA{{\mathscr A}}
\newcommand\scC{{\mathscr C}}
\newcommand\scH{{\mathscr H}}
\newcommand\scJ{{\mathscr J}}
\numberwithin{equation}{section}
\title{
Concentration profiles in FitzHugh-Nagumo neural networks:
A Hopf-Cole approach}
\date {\today}
\begin{document}
\maketitle

\begin{center}
\author{\textsc{Alain Blaustein} 
\footnote{Institut de Mathématiques de Toulouse, Université Paul Sabatier, 118 route de Narbonne - F-31062 Toulouse Cedex 9, France. E-mail :  \texttt{alain.blaustein@math.univ-toulouse.fr}}
\and \textsc{Emeric Bouin} \footnote{CEREMADE - Universit\'e Paris-Dauphine, UMR CNRS 7534, Place du Mar\'echal de Lattre de Tassigny, 75775 Paris Cedex 16, France. E-mail: \texttt{bouin@ceremade.dauphine.fr}}}

\end{center}
\bigskip

\begin{abstract}
In this paper we focus on a spatially extended FitzHugh-Nagumo model with interactions. In the  regime where strong and local interactions dominate, we quantify how the probability density of neurons concentrates into a Dirac distribution. Previous work investigating this question have provided relative bounds in integrability spaces. Using a Hopf-Cole framework, we derive precise $L^\infty$ estimates using subtle explicit sub- and super- solutions which prove, with rates of convergence, that the blow up profile is Gaussian. 
\end{abstract}

\maketitle
\vspace{0.5cm}
\noindent
\textbf{\textit{Keywords: }} .
\\

\noindent
\textbf{\textit{Mathematics Subject Classification (2010): }}


\section{Introduction}

\subsubsection*{\bf The model.} Neurons inside the brain are known to interact with each other through the variation of their membrane potential. One of the main challenges in neuroscience consists in understanding complex dynamics induced by these interactions. A quantitative description started with the pioneering work \cite{HH} of A. Hodgkin and A. Huxley, where they built an accurate model to describe the membrane potential dynamics of a single nerve cell submitted to an external current. Simplified versions of this model exist, for example FitzHugh-Nagumo models \cite{FHN1,FHN2}: these ones  keep the main features of Hodgkin-Huxley models but are more tractable from a mathematical point of view. Based on FitzHugh-Nagumo equations, we focus on the following model
    \begin{equation}
  \label{kinetic:eq}
  \ds\partial_t   f^\eps
         + 
        \mathrm{div}_{\bu}
        \left[  
        \mathbf{b}^\eps
         f^\eps 
         \right]
        - 
       \partial^2_v  
        f^\eps
         = 
        \frac{1}{\eps} 
        \rho^\eps_0 
        \partial_v 
        \left[ 
        (v-\cV^\eps)
         f^\eps 
        \right] 
        .
\end{equation}

Equation \eqref{kinetic:eq} is obtained as the mean-field limit of a microscopic model of the following form (the precise expression of $N$ and $A$ will be given later on), which is a coupled system of stochastic differential equations describing a neural network of size $n$,
\begin{equation*}
\left\{
    \begin{array}{lll}
      & \displaystyle \dD v^i_t = \left(N(v^i_t) - w^i_t  - \frac{1}{n}\sum_{j=1}^n \Phi_\eps(\bx_i,\bx_j) (v^i_t-v^j_t)  \right)\dD t   +  \sqrt{2} \dD B^i_t ,
      \\[2em]
        & \displaystyle \dD w^i_t = A\left(v^i_t,w^i_t\right) \dD t ,
    \end{array}
\right.
\end{equation*}

In the limit $n\rightarrow + \infty$ and with a suitable choice for the interaction kernel $\Phi_\eps$, the empirical measure associated to the latter system converges to $f^\eps$: see for instance \cite{mlimit1, mlimit2, mlimit3, mlimit4} for a rigorous proof of this limit in the case a the FitzHugh-Nagumo system and \cite{mlimit_bolley} for a related model in collective dynamics.

In \eqref{kinetic:eq}, $\bu = (v,w)\in\R^2$, where the first variable $v$ stands for the membrane potential of a neuron and is coupled to an auxiliary variable $w$ called the adaptation variable. The distribution function $f^\eps := f^\eps(t,\bx,\bu)$ represents the density of neurons at time $t$, located at position $\bx$ in a compact set $K\subseteq\R^d$, with a membrane potential $v\in\R$ and an adaptation variable $w\in \R$.
In the latter equation, the right hand side accounts for local interactions between neurons and it is re-scaled by a small parameter $\eps>0$ which describes their strength. It also displays the macroscopic quantities associated to the network: the spatial distribution of neurons throughout the network
\begin{equation*}
\rho_0^\eps(\bx)
=
\int_{\R^2}f^\eps(t,\bx,\bu) \dD \bu ,
\end{equation*}
and the averaged  voltage and adaptation variable at a spatial location $\bx$
\begin{equation}\label{macro:q}
\left\{
    \begin{array}{ll}
        \displaystyle \rho_0^\eps
        \left(
        \bx
        \right)\cV^\eps(t,\bx) 
        & = 
        \ds\int_{\R^2}v~f^\eps(t,\bx,\bu) \dD\bu ,
        \\[1.1em]
        \displaystyle \rho_0^\eps
        \left(
        \bx
        \right)\cW^\eps(t,\bx) 
        & = 
        \ds\int_{\R^2}w~f^\eps(t,\bx,\bu) \dD\bu .
    \end{array}
\right.
\end{equation}
In the sequel, we use the vector notation 
$\ds \mathcal{U}^\eps 
= 
\left(
\mathcal{V}^\eps ,
\mathcal{W}^\eps
\right)
$. Notice that $\rho_0^\eps$ is indeed time-homogeneous, integrating the mean field equation \eqref{kinetic:eq} with respect to $\bu\in\R^2$. In \eqref{kinetic:eq}, coefficient $\mathbf{b}^\eps$ is defined for all 
$
\ds
\left(
t,\bx,\bu
\right)
\in
\R^+
\times
K
\times 
\R^2
$
as
\begin{equation}\label{def:Beps}
\mathbf{b}^\eps(t,\bx,\bu)
 := 
\begin{pmatrix}
\ds 
B^\eps(t,\bx,\bu) \\[0,9em]
\ds
A(\bu)
\end{pmatrix}
= 
\begin{pmatrix}
N(v) - w - \mathcal{K}_{\Psi}[ f^\eps ]\left(t,\bx,v\right) . \\[0,9em]
\ds
 a v  -  b w  +  c
\end{pmatrix}
 ,
\end{equation}
where $a$, $c \in \R$ and $b>0$. The drift $N \in \scC^2(\R)$ satisfies
\begin{equation}\label{hyp:N}
\limsup_{|v| \to + \infty} \frac{N(v)}{\sign{v}|v|^p} < 0, \qquad \sup_{|v| \geq 1} \left\vert \frac{N(v)}{|v|^p} \right\vert < + \infty .
\end{equation}
for some $p \geq 2$, and
\begin{equation}\label{hyp N'}
    \sup_{|v| \geq 1} 
    \left( 
    \left|N''(v)\right|
     + 
    \left|N'(v)\right| 
    \right) 
    | v |^{-p'}
     <  + \infty ,
\end{equation}
for some $p'$.
A historical choice for $N$ is a confining non-linearity with the form
\[
N(v) =  v  -  v^3,
\] 
but many other cases are possible. 

The operator $\mathcal{K}_{\Psi}$, which accounts for the long-range interactions, is defined as
\[
\mathcal{K}_{\Psi}[ f ](t,\bx,v)
 = 
\int_{K\times\R^{2}}
\Psi(\bx,\bx')  (v-v') f(t,\bx',\bu') 
\dD\bx' \dD \bu' .
\]
The connectivity kernel
$\Psi\in\scC^0\left(K_{\bx}, L^1\left(K_{\bx'}\right)\right)$ satisfies
\begin{equation}
\label{hyp:psi}
\sup_{\bx'\in K} \int_{K}\left|\Psi(\bx,\bx')\right|  \dD\bx  <\; +\infty , \qquad \sup_{\bx\in K} \int_{K}\left|\Psi(\bx,\bx')\right|^r  \dD\bx'  <  +\infty ,
\end{equation}
for some $r>1$. Thanks to this set of assumptions on $\Psi$ our model takes into account non-symmetric interactions between neurons and authorize $\Psi$ to behave like a power law, a case which is considered in the physical literature (see \cite{mlimit4}). We point out that $\mathcal{K}_{\Psi}$ can be expressed in terms of the macroscopic quantities 
\[
\mathcal{K}_{\Psi}
        [ f^\eps ](t,\bx,v)
 = 
\Psi*_r\rho^\eps_0(\bx)  v  -  \Psi*_r(\rho^\eps_0 \;\cV^\eps)(t,\bx) ,
\]
where $*_r$ is a shorthand for the convolution on the right side of any function $g$ with $\Psi$
\[
\Psi*_r g(\bx)
\;= 
\int_{K}
\Psi(\bx,\bx') g(\bx') \dD \bx' .
\]

Before going further, we shall be precise about the notion of solution we consider for equation \eqref{kinetic:eq}. For this, we suppose, for each $\eps > 0$,
\begin{equation}\label{hyp0:mu0}
f^\eps_0
\in 
\scC^0
\left(K,L^1\left(\R^2
\right)\right)
 ,\quad
f^\eps \geq 0
\quad
\textrm{and}
\quad
\int_{\R^2} f^\eps_0(\bx,\bu) \dD \bu \dD \bx = 1,\quad \forall \bx \in K .
\end{equation}
Therefore, $\rho_0^\eps \in 
\scC^0
\left(K\right)$. We also suppose that 
\begin{equation}\label{hyp:rho0}
m_* \leq \rho_0^\varepsilon \leq 1/m_* ,
\end{equation}
for all $\eps > 0$ and for some positive constant $m_*$ independent of $\eps$. 
\begin{remark}
The uniform lower bound condition on $\rho_0^\eps$ seems necessary in our analysis because the problem degenerates when $\rho_0^\eps$ vanishes as it may be seen on equation \eqref{kinetic:eq}. However the upper bound condition on $\rho_0^\eps$ may be relaxed at the cost of loosing uniform convergence with respect to $\bx$ in our main result Theorem \ref{th:main}.
\end{remark}
On top of that, we assume the following condition: there exists two positive constants $m_p$ and $\ols{m}_p$, independent of $\eps$, such that
\begin{equation}
\label{hyp1:f0}
\sup_{\bx \in K}
 \int_{\R^2}
|\bu|^{2(p+p')} f^\eps_0
\left(\bx,\bu
\right) 
\dD \bu
 \leq  m_p ,
\end{equation}
and such that
\begin{equation}
\label{hyp2:f0}
\int_{K \times   \R^2} 
|\bu|^{2(p+p') r'}
 f^\eps_0
\left(\bx,\bu
\right)
\dD \bu \dD \bx
 \leq  \ols{m}_p ,
\end{equation}
where $p$, $p'$ and $r'$ are given in \eqref{hyp:N}, \eqref{hyp N'} and \eqref{hyp:psi}.

\begin{definition}\label{notion de solution}
For all $\varepsilon>0$, we say that $f^\varepsilon$ solves \eqref{kinetic:eq} with initial condition $f^\varepsilon_0$ if $f^\eps \in \scC^0
\left( 
\R^+\times K , 
L^1
\left(\R^2\right) 
\right)$ and for all $\bx \in K$, $t \geq 0$,
and $\ds\varphi \in \scC_c^\infty
\left(\R^2\right)$
, it holds
\begin{align*}
\int_{\R^2}
\varphi(\bu) 
\left(
f^\varepsilon
\left(t,\bx,\bu\right)
-
f^\varepsilon_{0}
\left(\bx,\bu\right)
\right) \dD\bu 
&= 
\int_0^t
\int_{\R^2}
\left[ 
\left(
\nabla_{\bu} \varphi
\cdot 
\mathbf{b}^\eps
         + 
\partial_v^2
 \varphi
\right)
f^\varepsilon 
\right](s,\bx,\bu) \dD\bu \dD s\\[0,8em]
&-\frac{\rho_0^\eps(\bx)}{\eps}
\int_0^t
\int_{\R^2}
\left[ 
\partial_v
 \varphi 
        \left(v-\cV^\eps\right)
f^\varepsilon 
\right](s,\bx,\bu) \dD\bu \dD s ,
\end{align*}
where $\mathcal{V}^\eps$ and $\mathbf{b}^\eps$  are given by \eqref{macro:q} and \eqref{kinetic:eq} respectively.
\end{definition}
With this notion of solution, equation \eqref{kinetic:eq} is well-posed, the following result being proved in \cite{BF}. 
\begin{theorem}[\cite{BF}]
\label{WP mean field eq}
For any $\varepsilon > 0$, suppose that assumptions \eqref{hyp:N} on $N$, \eqref{hyp:psi} on $\Psi$ and \eqref{hyp0:mu0}-\eqref{hyp:rho0} on the initial condition are fulfilled and that $f_0^\varepsilon$ also verifies
\begin{equation*}
\left\{
\begin{array}{l}
\ds\sup_{\bx \in K}
\int_{\R^2}
e^{|\bu|^2/2} 
f^\eps_{0}(\bx,\bu) 
\dD\bu 
<  +\infty ,
\\[1.5em]
\ds \sup_{\bx \in K}
\int_{\R^2}
\ln{
\left[ 
f^\eps_{0}(\bx,\bu) 
\right]} f^\eps_{0}(\bx,\bu)
\dD\bu 
<  +\infty ,
\end{array}\right.
\end{equation*}
and
\begin{equation*}
\sup_{\bx \in K}
    \left\|
\nabla_{\bu}
\sqrt{f^\eps_{0}} 
\right\|^2_{L^2(\R^2)} 
    <  +\infty .
\end{equation*}
Then there exists a unique solution $f^\varepsilon$ to equation \eqref{kinetic:eq} with initial condition $f^\varepsilon_0$, in the sense of Definition \ref{notion de solution} which verifies
\[
\displaystyle
\sup_{(t , \bx) \in [0,T]\times K} 
\int_{\R^2}
e^{|\bu|^2/2}  f^\varepsilon(t,\bx,\bu) 
\dD\bu  < +\infty ,
\]
for all $ T \geq 0$,
\end{theorem}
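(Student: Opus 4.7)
The natural strategy is a Picard/Schauder iteration on the macroscopic fields. Given $(\bar\cV^\eps,\bar\cW^\eps)\in \scC^0([0,T]\times K;\R^2)$, I freeze them inside the drift $\mathbf{b}^\eps$ and inside the Ornstein--Uhlenbeck right-hand side so that \eqref{kinetic:eq} becomes a linear degenerate parabolic equation in $\bu=(v,w)$, with $\bx\in K$ playing the role of a continuous passive parameter. Linear well-posedness is standard: one may either solve the associated SDE after a localisation of the cubic growth of $N$, or regularise by adding $\delta\partial_w^2 f$, build the solution via parabolic theory, and pass to the limit $\delta\to 0$ using uniform-in-$\delta$ bounds on moments, entropy and Fisher information. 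Continuity of the map $\bx\mapsto f^\eps(t,\bx,\cdot)$ in $L^1(\R^2)$ is then inherited from the continuity in $\bx$ of the coefficients and of $\Psi$.

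The core of the proof is a set of a priori estimates propagating the regularity assumed on $f^\eps_0$. Integration of \eqref{kinetic:eq} in $\bu$ immediately gives $\partial_t\rho_0^\eps=0$. For the exponential moment I multiply \eqref{kinetic:eq} by $e^{|\bu|^2/2}$ and integrate: the cubic confinement $vN(v)\lesssim v^2-c|v|^{p+1}$ provided by \eqref{hyp:N}, together with the quadratic confinement $-(v-\cV^\eps)^2 e^{|\bu|^2/2}/\eps$ coming from the nonlocal relaxation (obtained after integration by parts and using $v(v-\cV^\eps)=(v-\cV^\eps)^2+\cV^\eps(v-\cV^\eps)$ plus Young), dominate the bad contributions from the linear $w$-component of $A$ and from the diffusion $\partial_v^2$. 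Intermediate polynomial moments of orders up to $2(p+p')$ are propagated first using \eqref{hyp1:f0}--\eqref{hyp2:f0}; the $L^1_{\bx}$ estimate \eqref{hyp2:f0} is needed specifically to make sense of $\cK_\Psi[f^\eps]$ through Hölder with exponent $r'$ dual to \eqref{hyp:psi}. Entropy and Fisher information are controlled by the standard computation on $\int f\ln f$, the $\eps^{-1}$ term producing a favourable $\|\partial_v \sqrt{f^\eps}\|_{L^2}^2/\eps$ that absorbs the cross term generated by $\cK_\Psi$.

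With these bounds in hand, the map $\Phi:(\bar\cV^\eps,\bar\cW^\eps)\mapsto(\cV^\eps,\cW^\eps)$ obtained from the $v$- and $w$-moments of the associated linear solution sends a suitable bounded closed convex subset of $\scC^0([0,T]\times K;\R^2)$ into itself. Time compactness comes from \eqref{kinetic:eq} itself, which bounds $\partial_t\cU^\eps$ in a weak topology, while $\bx$-compactness follows from the continuity of the coefficients and of $\Psi$ ensured by \eqref{hyp:psi}. Schauder's theorem then yields existence on some $[0,T_0]$, and since none of the above bounds degenerates in finite time, a standard continuation argument extends the solution for all $T\ge 0$. Uniqueness is obtained by subtracting two solutions $f^\eps,\tilde f^\eps$, testing on a weighted $L^1$ difference in $\bu$, and observing that $\|\cV^\eps-\tilde\cV^\eps\|_\infty$ is controlled by this same weighted $L^1$ distance thanks to the propagated exponential moment; a Grönwall inequality then closes.

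The main obstacle is precisely to close the exponential moment uniformly in time despite the coupling of three difficulties: the super-linear nonlinearity $N$, the linear growth of $A$ in $w$ which is not directly dissipated by $\partial_v^2$, and the singular prefactor $1/\eps$. The first two are reconciled only after a careful chain of polynomial moment estimates, and the third forces one to handle the boundary/Young terms produced by $\cK_\Psi$ with quantitative control by the integrability properties of $\Psi$ in \eqref{hyp:psi}. This is exactly the step where assumptions \eqref{hyp1:f0}--\eqref{hyp2:f0} become indispensable, and where the detailed arguments of \cite{BF} are needed to make the formal computation rigorous.
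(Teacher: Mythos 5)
The statement you were asked to prove is not proved in this paper at all: it is imported verbatim from the companion work \cite{BF}, and the paper itself says so (``the following result being proved in \cite{BF}''). There is therefore no in-paper proof against which to measure your attempt, and the review has to assess the sketch on its own merits.

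On those merits, what you propose is a standard and plausible blueprint: freeze $(\cV^\eps,\cW^\eps)$, solve the resulting linear degenerate Fokker--Planck equation in $\bu$ with $\bx$ as a passive parameter, propagate exponential moments, entropy and Fisher information, close by Schauder plus continuation, and get uniqueness by a weighted $L^1$ Gr\"onwall estimate. But --- as you concede in your final paragraph --- every analytically decisive step is deferred to \cite{BF}. The clearest example is the exponential moment: testing \eqref{kinetic:eq} against $e^{|\bu|^2/2}$ produces the adverse term $\int (1+v^2)e^{|\bu|^2/2}f^\eps\,\dD\bu$ from the diffusion, cross terms of the form $(a-1)vw$ from the linear drift $A$ (which has no $w$-diffusion to help it), and a term $\frac{1}{\eps}\rho_0^\eps\,\cV^\eps\int (v-\cV^\eps)f^\eps e^{|\bu|^2/2}\,\dD\bu$ from the relaxation. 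Each must be absorbed into the confinement $vN(v)\lesssim -|v|^{p+1}$, the damping $-\tfrac{1}{\eps}\rho_0^\eps\int (v-\cV^\eps)^2 f^\eps e^{|\bu|^2/2}\,\dD\bu$, and the decay $-bw^2$ by a cascade of Young inequalities, and the resulting Gr\"onwall constant involves $\cV^\eps$, which is itself a moment of the unknown; you assert this closes but do not show it. Likewise the control of $\mathcal{K}_\Psi[f^\eps]$ via H\"older duality under \eqref{hyp:psi} and \eqref{hyp1:f0}--\eqref{hyp2:f0} is named rather than carried out. What you have is a coherent plan that explicitly leans on \cite{BF} for the substance --- understandable, since this paper treats the theorem as a black box --- but it is not a self-contained proof and cannot substitute for the cited argument.
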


\subsubsection*{\bf The question at hand.} The purpose of this article is to go through the mathematical analysis of the neural network in the regime of strong local interactions, that is when $\eps \ll 1$. More precisely, we prove that the voltage distribution concentrates into a Dirac mass and propose a quantitative description of the concentration profile. 
First, we highlight this concentration phenomenon with some formal computations. We look for the leading order in \eqref{kinetic:eq}: in our case, it is induced by short range interactions between neurons, and as $\eps\rightarrow 0$, we expect
$$
( v  -\; \cV^\eps) 
f^\eps \underset{\eps \rightarrow 0}{\to}  0 ,
$$
to make sure that no terms are singular. This means that $f^\eps$ concentrates towards a Dirac mass centred in $\cV^\eps$ with respect to the $v$-variable, that is
\[
f^\eps(t,\bx,\bu)
 \underset{\eps \rightarrow 0}{\approx}
\delta_0
\left(
v-
\cV^\eps(t,\bx)
\right) \otimes  F^\eps
(t,\bx,w)
 ,
\]
where $\ds \cV^\eps$ is given by \eqref{macro:q} and $F^\eps$ is defined as the marginal of $f^\eps$ with respect to the voltage variable
\begin{equation*}
F^\eps
\left(t,\bx,w
\right)
 = 
\int_{\R} f^\eps
\left(t,\bx,\bu
\right) \dD v .
\end{equation*}

Multiplying equation \eqref{kinetic:eq} by $v$ and then integrating over $\bu\in\R^2$~(resp.~$v\in\R$), multiplying the second line of \eqref{macro-eps:eq} by $w/\rho_0^\eps$ and integrating with respect to $w$, one finds that the couple $\ds(\cV^\eps,\cW^\eps)$ solves the following system
\begin{equation}\label{macro-eps:eq}
    \left\{
    \begin{array}{llll}
        &\displaystyle \partial_t \cV^\eps  =  
        N(\cV^\eps)
         - 
        \cW^\eps
         - 
       \mathcal{L}_{\rho^\eps_0}
       \left[ \mathcal{V^\eps} \right]
         + 
        \cE(f^\eps)
         ,\\[0.8em]
        &\partial_t  \cW^\eps
         = 
        A(\cV^\eps,\cW^\eps) ,
  \end{array}
\right.
\end{equation}
where the error term $
\ds\cE
\left(f^\eps
\right)$ is given by
\begin{equation}\label{error}
\cE
\left(
f^\eps\left(t,\bx,\cdot 
\right)
\right)
\;= 
\;
\frac{1}{\rho_0^\eps(\bx)} 
\int_{\R^2} N(v) 
f^\eps
\left(t,\bx,\bu
\right) 
\dD \bu -  N(\cV^\eps) ,
\end{equation}
and 
$\ds\mathcal{L}_{\rho_0^\eps}
$ is a non local operator given by
\[
\mathcal{L}_{\rho_0^\eps}
\left[ \cV^\eps 
\right]
=
\cV^\eps  \Psi*_r\rho_0^\eps  -  \Psi*_r(\rho_0^\eps \cV^\eps)\;.
\]

All this, in turn, implies that as $\eps$ vanishes,
$\ds\left(\cV^\eps,\cW^\eps
\right)$
converges to the couple $\ds\left(\cV,\cW
\right) $, which solves
\begin{equation}\label{macro:eq}
    \left\{
    \begin{array}{l}
        \displaystyle \partial_t \cV
         =  
        N(\cV)
         - 
        \cW
         - 
       \mathcal{L}_{\rho_0}[ \cV ]
         ,\\[0.9em]
        \displaystyle \partial_t  \cW
         = 
        A(\cV,\cW) ,\\[0.9em]
        \displaystyle 
        \left(
        \cV\left(0,\cdot\right)
        ,
        \cW\left(0,\cdot\right)
        \right)
         = 
        \left(
        \cV_0 
        ,
        \cW_0 
        \right)
         .
    \end{array}
\right.
\end{equation}
As of the marginal $F^\eps$, it has been shown in \cite{CFF,Blaustein,BF} that it also converges.

In this article, we refine the latter result by investigating the concentration profile of the solution $f^\eps$ when $\eps$ goes to $0$. For that purpose, we perform the so-called Hopf-Cole transform of $f^\eps$, 
\begin{equation}\label{def phi eps}
\phi^\eps := \eps \ln \left( \sqrt{\frac{2\pi\eps}{\rho_0}} f^\eps \right),
\end{equation}
and study the convergence of $\phi^\eps$ as $\eps$ goes to zero. This approach has been widely used in selection-mutation models in population dynamics (for example), to study concentration phenomena occurring in the small mutation regime for populations structured by space and trait: see for instance \cite{Barles/Mirrahimi/Perthame,Mirrahimi/Roquejoffre}.

\subsubsection*{\bf Heuristics and main result.} Let us now formally present the convergence of $\phi^\eps$:
injecting \textit{ansatz} \eqref{def phi eps} in equation \eqref{kinetic:eq}, we find that $\phi^\eps$ solves the following Hamilton-Jacobi equation
\begin{equation}\label{HJ eq ordre 0}
\partial_t \phi^\eps + \nabla_{\bu}\phi^\eps \cdot
\mathbf{b}^\eps + \eps  \mathrm{div}_{\bu} \left[  \mathbf{b}^\eps \right] - \partial^2_v \phi^\eps - \rho_0^\eps =  \frac{1}{\eps} \left(  \partial_v \left(
\frac{1}{2} 
\rho_0^\eps \left| v-\cV^\eps \right|^2 + \phi^\eps  \right) \partial_v \phi^\eps \right)
\end{equation}

Keeping only the leading order in equation \eqref{HJ eq ordre 0}, it yields
\[
\phi^\eps  \underset{\eps \rightarrow 0}{\approx} 
-\frac{\delta_1}{2} \,
\rho_0^\eps \left| v-\cV^\eps \right|^2\mathds{1}_{v<\cV^\eps}
-\frac{\delta_2}{2} 
\rho_0^\eps \left| v-\cV^\eps \right|^2\mathds{1}_{v>\cV^\eps}
+c
\,,
\]
for some positive constant $c$ and where $\delta_i$ lies in $\{0,1\}$.
Provided that the macroscopic quantities $\rho^\eps$ and $\ds
\cV^\eps
$ converge as $\eps$ goes to $0$, we expect
\[
\phi^\eps(\cdot, \cdot, v) 
\underset{\eps\rightarrow 0}{\longrightarrow}\; -\,\frac{\delta_1}{2} \,
\rho_0 \left| v-\cV \right|^2\mathds{1}_{v<\cV}
-\frac{\delta_2}{2} 
\rho_0 \left| v-\cV \right|^2\mathds{1}_{v>\cV}
+c
\,.
\]
Furthermore, since our problem conserves mass, we expect
\[
\int_{v\in\R}
\exp{
\left(
-\,\frac{\delta_1}{2\,\eps} \,
\rho_0 \left| v-\cV \right|^2\mathds{1}_{v<\cV}
-\frac{\delta_2}{2\,\eps} 
\rho_0 \left| v-\cV \right|^2\mathds{1}_{v>\cV}
+\frac{c}{\eps}
\right)
}
\,\dD v\,=\,\sqrt{\frac{\rho_0}{2\pi\eps}}\,,
\]
for all $\eps$. This forces $\delta_1=\delta_2=1$ and $c=0$ and therefore we obtain
\[
\phi^\eps(\cdot, \cdot, v) 
\underset{\eps\rightarrow 0}{\longrightarrow}\; -\,\frac{1}{2} \,
\rho_0 \left| v-\cV \right|^2
\,.
\]
This convergence is the object of our main result, Theorem \ref{th:main} below. It will justify the latter limit and provide explicit convergence rates. Our strategy consists in performing a Hilbert expansion of $\phi^\eps$ with respect to $\eps$ and to prove that the higher order terms in the expansion are uniformly bounded with respect to $\eps$ using a comparison principle (see Section \ref{A priori estimates} for more details).

\begin{theorem}\label{th:main}
Assume \eqref{hyp:N}-\eqref{hyp2:f0} and the additional assumptions of Proposition \ref{classical solution} and Theorem \ref{WP mean field eq}. Suppose that there exists a positive constant $C$ independent of $\eps$ such that the following compatibility assumption holds
\begin{equation}\label{compatibility assumption}
\left\| 
\mathcal{U}_0
 - 
\mathcal{U}_0^\eps 
\right\|_{L^{\infty}(K)}
 + 
\| 
\rho_0
-
\rho_0^\eps 
\|_{L^{\infty}(K)} 
\leq 
C \eps ,
\end{equation}
as well as the following set of "smallness assumptions"
\begin{subequations}
\begin{numcases}{}
\label{hyp:HJ}
\ds\left|\phi^\eps_0
+ \frac{1}{2} 
\rho_0 \left| v-\cV \right|^2 
- \eps n
\right|
 \leq 
\eps C
\left(
1 + 
|\bu|^2
\right) ,\quad
\forall 
(\bx,\bu)\in K \times \R^2 ,\\[0,9em]
\label{hyp:f eps 0 well prepared}
\int_{\R^2} 
\left(
|v- \cV^\eps_0|^{2} + |v- \cV^\eps_0|^{p'+1}
\right) 
f^\eps_0(\cdot,\bu) \dD \bu
 \leq  C \eps .
\end{numcases}
\end{subequations}

Then the sequence $\left(\phi^\eps\right)_{\eps>0}$ of Hopf-Cole transforms of $
\left(f^\eps\right)_{\eps > 0}$ is well defined and it converges locally uniformly on $
\R^+\times K\times\R^2
$ to $-
\frac{1}{2} 
\rho_0 \left| v-\cV \right|^2$ with rate $\eps$. More precisely, there exist two positive constants $C$ and $\eps_0$ such that for all $\eps \leq \eps_0$,
\[
\left|\phi^\eps
+ 
\frac{1}{2} 
\rho_0 \left| v-\cV \right|^2 
- \eps n
\right|(t,\bx,\bu)
 \leq 
\eps 
Ce^{Ct}
\left(
1 + 
|\bu|^2
\right) ,\quad
\forall 
(t,\bx,\bu)\in \R^+\times  K \times \R^2 .
\] 
As a consequence, $f^\eps$ converges uniformly to $0$ on the compact subsets of 
$ \R^+\times K \times \R^2
\setminus
\left\{
v\neq 
\cV\left(t,\bx\right)
\right\}
$.
In the latter results, constants $C$ and $\eps_0$ only depends on the data of our problem: $f^\eps_0$ (only through the constants appearing in assumptions
\eqref{hyp:rho0}-\eqref{hyp2:f0} and
\eqref{compatibility assumption}-\eqref{hyp:f eps 0 well prepared}), $N$, $A$ and $\Psi$.
\end{theorem}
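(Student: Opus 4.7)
The plan is to work directly with the Hamilton--Jacobi equation \eqref{HJ eq ordre 0} satisfied by the Hopf--Cole transform $\phi^\eps$ and to combine a Hilbert expansion in $\eps$ with a comparison principle based on explicit, polynomially growing sub- and super-solutions. As a preliminary step, I would use the regularity provided by Proposition \ref{classical solution} together with the strong maximum principle applied to the (non-degenerate in $v$) parabolic equation \eqref{kinetic:eq} to ensure $f^\eps > 0$ pointwise, so that $\phi^\eps$ is a bona fide classical solution of \eqref{HJ eq ordre 0}.

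\textbf{Identification of the profiles.} The heuristics in the introduction already force the leading profile to be $\phi^0(t,\bx,v) := -\tfrac12 \rho_0(\bx)|v-\cV(t,\bx)|^2$, for which the $O(\eps^{-1})$ contribution on the right of \eqref{HJ eq ordre 0} cancels identically, since $\partial_v(\tfrac12 \rho_0 |v-\cV|^2 + \phi^0) = 0$. Inserting the ansatz $\phi^\eps = \phi^0 + \eps \, n + \eps^2 R^\eps$ into \eqref{HJ eq ordre 0} and collecting the $O(1)$ balance yields a linear transport--diffusion equation for the first-order profile $n(t,\bx,\bu)$, whose coefficients depend on $\rho_0$, $\cV$, $\cW$, $N$, $A$, and $\Psi$ through the macroscopic system \eqref{macro:eq}. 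The compatibility assumption \eqref{hyp:HJ} then precisely encodes that $R^\eps|_{t=0}$ is bounded by $C(1+|\bu|^2)$ uniformly in $\eps$.

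\textbf{Sub- and super-solutions.} After subtracting the two profiles and dividing by $\eps^2$, $R^\eps$ satisfies a Hamilton--Jacobi type equation of the schematic form
\[
\partial_t R^\eps + \mathbf{b}^\eps \cdot \nabla_\bu R^\eps - \partial_v^2 R^\eps + \rho_0^\eps (\partial_v R^\eps)^2 \;=\; S^\eps(t,\bx,\bu),
\]
where the source $S^\eps$ is pointwise controlled by a polynomial in $|\bu|$ thanks to \eqref{hyp:N}--\eqref{hyp N'} on $N$ and to the $O(\eps)$ control of the macroscopic errors $\cV^\eps - \cV$, $\rho_0^\eps - \rho_0$ obtained from \eqref{compatibility assumption}, \eqref{macro-eps:eq}--\eqref{error} and a Grönwall argument. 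I would then introduce
\[
\overline{R}(t,\bx,\bu) \;=\; C\, e^{Ct}(1+|\bu|^2), \qquad \underline{R}(t,\bx,\bu) \;=\; - \overline{R}(t,\bx,\bu),
\]
and verify, for $C$ large enough depending only on the data, that $\overline{R}$ is a super-solution and $\underline{R}$ a sub-solution of the equation for $R^\eps$: the diffusion only contributes a harmless $-2\rho_0^\eps C e^{Ct}$, the quadratic Hamiltonian $(\partial_v R^\eps)^2$ has the right sign, and the confining assumption \eqref{hyp:N} on $N$ allows the polynomial source to be absorbed against the quadratic penalty $1+|\bu|^2$. A comparison principle, which uses the strict quadratic growth of $\overline{R}$ to exclude a maximum of $R^\eps - \overline{R}$ at infinity, yields the announced $L^\infty$ bound. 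Exponentiating back through \eqref{def phi eps} then delivers the uniform decay of $f^\eps$ to $0$ on compact subsets of $\{v \neq \cV(t,\bx)\}$.

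\textbf{Main obstacles.} The key technical difficulty lies in the comparison step: the source $S^\eps$ inherits the polynomial growth of $N$ and its derivatives of order up to $|\bu|^{p+p'}$ by \eqref{hyp:N}--\eqref{hyp N'}, and showing that the quadratic template $Ce^{Ct}(1+|\bu|^2)$ actually dominates it requires exploiting both the sign condition of $N$ at infinity and a careful cancellation between the drift term $\mathbf{b}^\eps \cdot \nabla_\bu R^\eps$ and the quadratic Hamiltonian $(\partial_v R^\eps)^2$ (this is the reason why the heuristic expansion distinguished between $v<\cV$ and $v>\cV$ before being closed by mass conservation). A secondary difficulty is the non-local coupling through $\cV^\eps$ and $\rho_0^\eps$: the comparison argument must be run in tandem with a quantitative convergence of the macroscopic quantities, which itself relies on the uniform-in-$\eps$ propagation of the moment bounds \eqref{hyp1:f0}--\eqref{hyp2:f0}--\eqref{hyp:f eps 0 well prepared} along \eqref{macro-eps:eq}.
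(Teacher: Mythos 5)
Your overall strategy (Hilbert expansion of $\phi^\eps$, explicit polynomially-growing sub- and super-solutions, comparison principle) is the same as the paper's, but several of the concrete choices you make would break the argument.

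First, the claim that the $O(\eps^{-1})$ term on the right of \eqref{HJ eq ordre 0} ``cancels identically'' when you plug in $\phi^0 = -\tfrac12 \rho_0 |v-\cV|^2$ is false: that term is
$\tfrac{1}{\eps}\,\partial_v\bigl(\tfrac12 \rho_0^\eps|v-\cV^\eps|^2 + \phi^\eps\bigr)\,\partial_v\phi^\eps$,
with the $\eps$-dependent quantities $\rho_0^\eps$, $\cV^\eps$ appearing inside. With your choice of $\phi^0$ the bracket is
$\tfrac12\bigl(\rho_0^\eps|v-\cV^\eps|^2 - \rho_0|v-\cV|^2\bigr) + O(\eps)$,
which is only $O(\eps(1+|\bu|^2))$ by the compatibility assumption \eqref{compatibility assumption}, not zero. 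The paper avoids this by writing the leading profile as $-\tfrac12\rho_0^\eps|v-\cV^\eps|^2$ (see \eqref{def phi eps 1}), so the singular term genuinely vanishes at that order, and only replaces $\rho_0^\eps,\cV^\eps$ by their limits in the very last step, using item~\eqref{cv macro q} of Theorem~\ref{th:preliminary}. If you insist on starting from the limit profile you must track this extra $O(1)$ error term explicitly; as written your decomposition drops it.

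Second, the comparison step as you describe it would need a maximum principle for a Hamilton--Jacobi equation with a source growing like $|\bu|^{p+1}$, $p\geq 2$, against a merely quadratic barrier $Ce^{Ct}(1+|\bu|^2)$, and with a non-Lipschitz drift $N$. This is exactly the obstacle the paper flags in the ``Comments on the strategy'' and circumvents deliberately: the confining inequality $(v-\cV^\eps)(N(v)-N(\cV^\eps)) \le C - \tfrac1C |v-\cV^\eps|^{p+1}$ is what absorbs all the high-order polynomial growth, but for that mechanism to fire, the profile $n(v)$ (with $n'=N$) must sit in the \emph{center} of the sub/super-solution envelope (in the paper it is inside $\overline{\phi^\eps_1}$, not in the remainder $R^\eps$), so that the $O(1)$ equation \eqref{HJ eq ordre 1} on $\phi^\eps_1$ retains the full singular term $\tfrac1\eps \rho_0^\eps(v-\cV^\eps)\partial_v(\phi^\eps_1-\overline{\phi^\eps_1})$, which is sign-favorable. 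Moreover the paper does \emph{not} run the comparison directly on the Hamilton--Jacobi level: it transfers $\chi_\pm$ to super/sub-solutions of the linearized kinetic equation \eqref{linearkinetic:eq} via Lemma~\ref{eq ss sol HJ kin} and proves the comparison there (Lemma~\ref{comp principle kin eq}), precisely because the Gaussian weight $f^\eps$ supplies the integrability needed to integrate by parts and avoid a ``maximum at infinity'' issue that your quadratic-growth exclusion argument does not actually settle (a priori $R^\eps$ may grow as fast as the source).

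Third, a consistency issue: with your decomposition $\phi^\eps = \phi^0 + \eps n + \eps^2 R^\eps$ and the claim that $n$ is determined by the $O(1)$ balance and $R^\eps = O(1)$, you would prove an $O(\eps^2)$ remainder, stronger than the theorem; but assumption \eqref{hyp:HJ} only gives $R^\eps|_{t=0} = O(1/\eps)\,(1+|\bu|^2)$ with your bookkeeping, so the barrier you propose does not even match the data at $t=0$. The theorem's $n$ is simply a primitive of $N$ in the $v$-variable, not the solution of a transport--diffusion equation; the full $O(1)$ profile in the paper is $\overline{\phi^\eps_1}$ (defined below \eqref{HJ eq ordre 1}), which contains $n(v)$ but also $(v-\cV^\eps)(w-\cW^\eps)$, $\cE(f^\eps)$ and $\Psi*_r\rho_0^\eps$ corrections, and those are exactly the terms that produce, in the end, a bound of order $\eps(1+|\bu|^2)$ rather than $\eps^2$.
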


Before going further into our analysis, let us comment on our result. We first emphasize that our result deals with uniform convergence with respect to all variables, which is a great improvement in comparison to former results obtained in \cite{Blaustein,BF}, where $L^1$, $L^2$ and weak convergence estimates were obtained. We also point out that the present article is in line with \cite{HJ quininao/touboul}, which also goes through the analysis of the convergence of $\phi^\eps$. However, the latter article relies on a compactness argument and this has two major consequences: first the limit $-\frac{1}{2}  \rho_0 \left| v-\cV \right|^2$ is not identified and second no rate of convergence is obtained. To end with, authors in \cite{HJ quininao/touboul} deal with a spatially homogeneous neural network. 

Our result does not yet describe the limiting pointwise dynamics of $f^\eps$ with respect to the adaptation variable $w$: this corresponds to limit of $f^\eps$ on the set of points $\ds(t,\bx,\bu)
\in \R^+ \times K \times \R^2
$ such that $v=\cV(t,\bx)$. Indeed, due to \textit{ansatz} \eqref{def phi eps}, we only treat the limiting dynamics with respect to the fast variable $v$. This difficulty is quite common in problems which display fast and slow variables: see \cite{Bouin/Mirrahimi} for an another instance in a model for a population structured by trait and spatial location. In order to fill the gap, one would need to investigate the convergence of $\phi^\eps_1$ defined as
\begin{equation}
   \phi^\eps_1 := \frac{1}{\eps} \left(  \phi^\eps +
\frac{1}{2} 
\rho_0 \left| v-\cV \right|^2  \right).
\end{equation}
which corresponds to the next term in the expansion of $\phi^\eps$ with respect to $\eps$. We provide a partial answer to this concern in Section \ref{A priori estimates} by identifying a formal equivalent of $\phi^\eps_1$ but the rigorous analysis of the convergence of $\phi^\eps_1$ with locally uniform bounds is beyond the scope of this article.

The last comment on our result is that it holds in a perturbative setting. Indeed, our assumption at time $t=0$ translates on $f^\eps_0$ as follows
\begin{equation*}
    f^\eps_0
    \left(\bx,\bu\right) 
     \underset{\eps\rightarrow 0}{=} 
    \sqrt{\frac{\rho_0(\bx)}{2\pi\eps}}
    \exp{
    \left(
    - 
\frac{\rho_0(\bx)}{2 \eps} 
        \left|
        v-\cV_0
        \left(\bx\right)
        \right|^2
         + 
        O(1)
    \right)
    } ,
\end{equation*}
which means that the $f^\eps$ is already concentrated at initial time. This restriction is quite common in articles which follow a Hopf-Cole transform approach: it is the case in all the references cited above \cite{Barles/Mirrahimi/Perthame,Mirrahimi/Roquejoffre,HJ quininao/touboul,Bouin/Mirrahimi}. Let us point out that \cite{HJ quininao/touboul} deals with initial conditions concentrated with respect to the adaptation variable $w$ as well, a condition which is lifted in our result. In \cite{BF,Blaustein}, $f^\eps$ is not initially concentrated and we deal with general initial conditions, however this is possible since convergences are in $L^1$ and $L^2$.

\subsubsection*{\bf Comments on the stategy.} Let us outline our strategy and the challenges in order to prove Theorem \ref{th:main}. The main difficulty is induced by the drift $N$, which is not Lipschitz according to assumption \eqref{hyp:N}. Indeed, a common technique consists in obtaining some uniform bounds with respect to $\eps$ on the derivatives of $\phi^\eps$ by applying the Bernstein method (see \cite{Barles} for a general setting in which the Bernstein method applies and \cite{Barles/Mirrahimi/Perthame, HJ quininao/touboul} for some application in particular contexts) and then to conclude on the convergence of $\phi^\eps$ with a compactness argument. However, this method, does not seem to apply easily here since $N$ is not uniformly Lischitz and therefore induces high-order terms with respect to $v$ in the equations on the derivatives of $\phi^\eps$. An alternate mean to carry out the proof would be to use the method of half-relaxed limits introduced by Barles and Perthame in \cite{Barles/Perthame} which applies without requiring any regularity estimates, at the cost of loosing continuity and therefore uniqueness in the limit $\eps \rightarrow 0$. To recover uniqueness, we add the additional constraint 
\[
\phi(t,\bx,\cV,w)\,=\,0\,
\]
on the limiting problem. But proving that the limit provided by the method of half-relaxed limits satisfies this constraint brings us back to our initial unsolved problem since it requires regularity estimates on the derivatives of $\phi^\eps$. To bypass these difficulties, we choose another approach which does not require regularity estimates and which has the advantage of providing explicit convergence rates: instead of proving uniform estimates on the derivatives of $\phi^\eps$, we prove uniform estimates on the first term in the expansion of $\phi^\eps$ with respect to $\eps$. This is made possible since this first term takes into account the non-linear fluctuations induced by $N$. Indeed, these non-linear variations induced by $N$ are expected to be perturbations of order $\eps$, as it may be seen rewriting equation \eqref{HJ eq ordre 0} on $\phi^\eps$ as follows
\[
\partial_v
        \left( 
        -\frac{1}{2} 
\rho_0^\eps \left| v-\cV^\eps \right|^2
        +\eps n(v)
        -\phi^\eps 
        \right) \partial_v  \phi^\eps
+
\hdots
= 0 ,
\]
where the correction $n(v)$ is such that $n'(v)=N(v)$ and
where "$\hdots$" gathers the lower order terms with respect to $v$ and $w$.
Hence, at least formally as $\eps$ goes to zero, we expect
\[
\phi^\eps   \underset{\eps \rightarrow 0}{\approx} -\frac{1}{2} 
\rho_0^\eps \left| v-\cV^\eps \right|^2
         + \eps n(v)
 .
\]
Therefore, our strategy
consists in considering the first term $\phi^\eps_1$ in the expansion of $\phi^\eps$ with respect to $\eps$, that is
\[
\phi^\eps  = 
-\frac{1}{2} 
\rho_0^\eps \left| v-\cV^\eps \right|^2
 + \eps 
\phi^\eps_1 ,
\] 
and to prove that it looks like $n(v)$ plus some uniformly bounded with respect to $\eps$ lower order terms which are induced by the globally Lipschitz coefficient in equation \eqref{HJ eq ordre 0}. To do so, we identify a formal equivalent $\overline{\phi^\eps_1}$
of $\phi^\eps_1$ as $\eps$ goes to zero, which displays $n(v)$ and which depends on $\eps$ only through the macroscopic quantities 
$\left(\cV^\eps,\cW^\eps\right)$ (see Section \ref{A priori estimates}). In Lemma \ref{lemme technique s s solution}, we look for super and sub-solutions to the equation solved by $\phi^\eps_1$ with the form $\chi_{\pm} = \overline{\phi^\eps_1} \pm  \psi$. Once this is done, we apply a comparison principle in order to obtain $\chi_- \leq \phi^\eps_1 \leq \chi_+$. The last step consists in proving that the equivalents $-\frac{1}{2} 
\rho_0^\eps \left| v-\cV^\eps \right|^2$ and $\overline{\phi^\eps_1}$, which depend on $\eps$ only through the macroscopic quantities
$\left(\cV^\eps,\cW^\eps\right)$ have convergence and boundedness properties. This is done relying on previous results which ensure that the macroscopic quantities converge (see Theorem \ref{th:preliminary}).

\subsubsection*{\bf Comments and perspectives.}
Two major perspectives arise from our work. First, our result holds in a perturbative setting in the sense that we need the initial data to be concentrated in order for our result to hold true. It would be interesting to lift this constraint and to treat a general set of initial data without requiring any well-preparedness condition. To achieve this, one possibility would be to adapt the strategy adopted in \cite{Blaustein}, where we introduced a time dependant scaling in order to take into account the initial layer induced by the ill-preparedness of the initial data. Second, our result does not describe the limiting dynamics with respect to the adaptation variable, which is a "slow variable" in our problem. Indeed, due to our starting point, \textit{ansatz} \eqref{def phi eps}, our result is doomed to only describe the limiting dynamics with respect to $v$, the fast variable of our problem. It would be a great but challenging improvement to carry out the convergence analysis of the first corrective term in the Hilbert expansion of $\phi^\eps$ with respect to the scaling parameter $\eps$ in order to fill this gap.

\subsubsection*{\bf Structure of the paper.} The remaining part of this article is organized as follows: in Section \ref{sec:preli}, we prove some regularity estimates for equation \eqref{kinetic:eq} in order to make our further computations rigorous: this is the object of Lemma \ref{WP2} and Proposition \ref{classical solution}. In Theorem \ref{th:preliminary} and Proposition \ref{estimate:derivee:erreur}, we also we recall and prove some convergence results on the macroscopic quantities $\cU^\eps$ and $\cE\left(f^\eps\right)$. Then we pass to Section \ref{A priori estimates}, which is dedicated to the proof of Theorem \ref{th:main}. The proof relies on the key Lemma \ref{lemme technique s s solution}, in which we construct sub- and super-solution for equation \eqref{HJ eq ordre 0} on $\phi^\eps$.

\section{Preliminary estimates}\label{sec:preli}

\begin{lemma}\label{WP2}
For all function $\varphi\in\scC^2
\left(
\R^2
\right)
$ with polynomial growth of order $r \geq 0$, that is 
\[
\left|\varphi(\bu)\right|
 + 
\left|\nabla_{\bu}\varphi(\bu)\right|
 + 
\left|\nabla^2_{\bu}\varphi(\bu)\right|
 \underset{|\bu|\rightarrow+\infty}{=} 
O\left(|\bu|^r
\right) ,
\]
the function 
$\ds
\left(
\left(
t,\bx
\right)
\mapsto
\int_{\R^2}\varphi(\bu) f^\eps
\left(t,\bx,\bu\right) \dD\bu
\right)
$ is continuous and has continuous time derivative over $\R^+ \times K$. In particular, the macroscopic quantities 
$
\ds
\mathcal{V}^\eps
$ and 
$
\ds
\mathcal{W}^\eps
$ given by \eqref{macro:q} and the error 
$
\ds
\cE\left(f^\eps\right) 
$ given by \eqref{error} are continuous and have continuous time derivatives.

\end{lemma}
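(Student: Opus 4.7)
The plan is to extend the weak formulation of Definition \ref{notion de solution} from $\mathcal{C}^\infty_c(\R^2)$ to test functions $\varphi$ with polynomial growth, by a cut-off argument whose remainders are controlled by the uniform exponential moments provided by Theorem \ref{WP mean field eq}.

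\textbf{Step 1 (continuity in $(t,\bx)$).} Fix $T > 0$, $r \geq 0$, and let $\chi_R \in \mathcal{C}_c^\infty(\R^2)$ be a standard cut-off equal to $1$ on $B_R$ and $0$ outside $B_{2R}$. Since $f^\eps \in \mathcal{C}^0(\R^+\times K, L^1(\R^2))$, the map $(t,\bx) \mapsto \int_{\R^2} \varphi\chi_R(\bu) f^\eps(t,\bx,\bu)\,\dD\bu$ is continuous on $[0,T]\times K$ for each $R$, because $\varphi\chi_R$ is bounded. The remainder satisfies
\[
\left| \int_{\R^2} \varphi(\bu)(1-\chi_R(\bu)) f^\eps(t,\bx,\bu)\,\dD\bu \right|
\,\leq\, C \int_{|\bu|\geq R} |\bu|^r f^\eps(t,\bx,\bu)\,\dD\bu,
\]
and using $|\bu|^r \leq C_r\,e^{|\bu|^2/4}$ together with
\[
\int_{|\bu|\geq R} e^{|\bu|^2/4} f^\eps \,\dD\bu
\,\leq\, e^{-R^2/4} \int_{\R^2} e^{|\bu|^2/2} f^\eps \,\dD\bu,
\]
the uniform exponential moment bound from Theorem \ref{WP mean field eq} shows that the remainder converges to $0$ uniformly in $(t,\bx)\in [0,T]\times K$ as $R\to+\infty$. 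Hence $(t,\bx)\mapsto \int \varphi f^\eps\,\dD\bu$ is a uniform limit of continuous functions and therefore continuous.

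\textbf{Step 2 (extended weak formulation).} For $\varphi\chi_R\in\mathcal{C}_c^\infty(\R^2)$, Definition \ref{notion de solution} yields
\[
\int_{\R^2} \varphi\chi_R(f^\eps(t,\bx,\cdot)-f_0^\eps(\bx,\cdot))\,\dD\bu
= \int_0^t G_R^\eps(s,\bx)\,\dD s,
\]
where $G_R^\eps$ gathers the contributions of $\nabla_\bu(\varphi\chi_R)\cdot \mathbf{b}^\eps f^\eps$, $\partial_v^2(\varphi\chi_R) f^\eps$ and $\frac{\rho_0^\eps}{\eps}\partial_v(\varphi\chi_R)(v-\cV^\eps) f^\eps$. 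Each integrand has polynomial growth in $\bu$ (of order at most $r+p$ thanks to assumption \eqref{hyp:N} on $N$), so the same exponential-moment argument as in Step~1, combined with the explicit decay of derivatives of $\chi_R$, lets us pass to the limit $R\to\infty$ and obtain the identity
\[
\int_{\R^2} \varphi\,(f^\eps(t,\bx,\cdot)-f_0^\eps(\bx,\cdot))\,\dD\bu
= \int_0^t G^\eps(s,\bx)\,\dD s,
\]
with $G^\eps$ the integral against $f^\eps$ of the (polynomial-growth) differential operator applied to $\varphi$. The integrability of all terms in $G^\eps$, and the fact that $\cV^\eps$ is finite and continuous (obtained by applying Step 1 to $\varphi(\bu)=v$), are again consequences of the uniform exponential moment.

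\textbf{Step 3 (continuity of $G^\eps$ and conclusion).} Applying Step 1 to each polynomial-growth test function appearing in $G^\eps$ (namely $\nabla_\bu \varphi \cdot \mathbf{b}^\eps$, $\partial_v^2\varphi$ and $\partial_v\varphi\,(v-\cV^\eps)$, which are continuous functions of $\bu$ with polynomial growth once $\cV^\eps$ is known to be continuous), we get that $G^\eps \in \mathcal{C}^0(\R^+\times K)$. The identity in Step 2 then shows that $t\mapsto \int \varphi f^\eps\,\dD\bu$ is a $\mathcal{C}^1$ function of $t$ whose derivative $G^\eps(t,\bx)$ is continuous in $(t,\bx)$. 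Choosing in turn $\varphi(\bu)=v$, $\varphi(\bu)=w$ and $\varphi(\bu)=N(v)$ (which satisfies the polynomial growth condition by \eqref{hyp:N}--\eqref{hyp N'}) and dividing by $\rho_0^\eps$, which is bounded below by \eqref{hyp:rho0}, yields the claims on $\cV^\eps$, $\cW^\eps$ and $\cE(f^\eps)$.

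The main obstacle is really just Step 2: carefully justifying that the cut-off $\chi_R$ can be removed inside the time integral, uniformly on compact time intervals, despite the $\eps^{-1}$ prefactor on the singular term. This reduces to showing that $\int |v-\cV^\eps|\,|\bu|^r\,\one_{|\bu|\geq R}\,f^\eps\,\dD\bu$ is negligible, which the exponential moment handles as in Step 1; everything else is an elementary limit argument.
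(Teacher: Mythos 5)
Your proof is correct, and its overall architecture matches the paper's: first establish continuity of $\varphi(f^\eps):=\int_{\R^2}\varphi f^\eps\,\dD\bu$ in $(t,\bx)$, then derive an explicit formula for $\partial_t\varphi(f^\eps)$ whose right-hand side is continuous by the first step. The genuinely different choice is in the first step. The paper proves continuity via a single Cauchy--Schwarz estimate,
\[
\left|\varphi(f^\eps)(t,\bx)-\varphi(f^\eps)(s,\by)\right|
\leq
\left(\varphi^2(f^\eps)(t,\bx)+\varphi^2(f^\eps)(s,\by)\right)^{1/2}\left\|f^\eps(t,\bx,\cdot)-f^\eps(s,\by,\cdot)\right\|_{L^1(\R^2)}^{1/2},
\]
which directly exploits the $\scC^0(\R^+\times K;L^1)$ regularity of $f^\eps$ together with local boundedness of $\varphi^2(f^\eps)$ from the exponential moments; you instead truncate by $\chi_R$, control the tail with the same exponential moment bound, and pass to a uniform limit. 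Both routes rest on the same two ingredients and are essentially interchangeable, the paper's being shorter. On the other hand your Step~2 makes fully explicit the extension of the weak formulation from $\scC^\infty_c$ to polynomial-growth test functions, whereas the paper simply says ``multiply \eqref{kinetic:eq} by $\varphi$ and integrate'' and leaves that approximation implicit, so your version is slightly more careful there. One small point to tighten in Step~3: $\nabla_\bu\varphi\cdot\mathbf{b}^\eps$ and $\partial_v\varphi\,(v-\cV^\eps)$ are not functions of $\bu$ alone, since they depend on $(t,\bx)$ through $\cV^\eps$, $\Psi*_r\rho^\eps_0$ and $\Psi*_r(\rho^\eps_0\cV^\eps)$; before invoking Step~1 you should decompose them, as the paper does with its $\xi_1$, $\xi_2$, into products of $(t,\bx)$-continuous prefactors and $\bu$-only polynomial-growth test functions.
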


\begin{proof}
Take such a function $\ds\varphi$. To simplify notations we write
\[
\varphi
\left(
f^\eps
\right):
\left(
t,\bx
\right)
\mapsto
\int_{\R^2}\varphi(\bu) f^\eps
\left(t,\bx,\bu\right) \dD\bu .
\]
We start by proving that $\ds\varphi\left(f^\eps\right)$ is continuous. This is straightforward according to the following estimate obtained after applying Cauchy-Schwarz inequality
\[
\left|
\varphi\left(
f^\eps
\right)(t,\bx)
-
\varphi\left(
f^\eps
\right)(s,\by)
\right|
 \leq 
\left(
\varphi^2\left(
f^\eps
\right)(t,\bx)
+
\varphi^2\left(
f^\eps
\right)(s,\by)
\right)^{1/2}
\|
f^\eps
\left(t,\bx\right)
-
f^\eps
\left(s,\by\right)
\|_{L^1(\R^2)}^{1/2} .
\]
Indeed, according to Theorem \ref{WP mean field eq}, $\varphi^2\left(
f^\eps
\right)$ is locally bounded with respect to $(t,\bx)$ since $f^\eps$ has exponential moments and $\varphi$ has polynomial growth. Therefore, we obtain the result since $f^\eps$ lies in
$
\scC^0
\left( 
\R^+\times K , 
L^1
\left(\R^2\right) 
\right)$ according to Definition \ref{notion de solution}.

We now prove that $\varphi\left(
f^\eps
\right)$ has continuous time derivative. We multiply equation \eqref{kinetic:eq} by $\varphi$ and integrate with respect to $\bu$. After an integration by part, this yields
\[
\partial_t \varphi
\left(
f^\eps
\right)
 = 
\xi_1
\left(
f^\eps
\right)
 + 
\left(
\frac{1}{\eps} 
\rho_0^\eps \cV^\eps 
+ 
\Psi*_r(\rho^\eps_0 \;\cV^\eps)
\right) 
\partial_v \varphi
\left(
f^\eps
\right)
 - 
\left(
\frac{1}{\eps} 
\rho_0^\eps 
+ 
\Psi*_r\rho^\eps_0
\right) 
\xi_2
\left(
f^\eps
\right) ,
\]
with 
\[
\left\{
    \begin{array}{ll}
        \displaystyle \xi_1(\bu)
 = 
  \ds\partial_v   \varphi(\bu)
        \left(
        N(v)-w
        \right)
         + 
        \partial_w   \varphi(\bu)
        A(\bu)
         + 
        \partial_v^2   \varphi(\bu)
         
        ,
        \\[1.1em]
        \displaystyle \xi_2(\bu)
 = 
  \ds\partial_v   \varphi(\bu)
         v
         
        .
    \end{array}
\right.
\]
Functions $\ds\xi_1
\left(
f^\eps
\right),
\xi_2
\left(
f^\eps
\right),
\partial_v \varphi
\left(
f^\eps
\right),
\cV^\eps
$ are continuous according to the previous step. Furthermore, we obtain that function $\Psi*_r(\rho^\eps_0 \;\cV^\eps)$ and $\Psi*_r\rho^\eps_0$ are continuous using continuity of $\cV^\eps$ and $\rho_0^\eps$ and the assumption $\Psi\in
\scC^0
\left(
K_{\bx}, 
L^1
\left(
K_{\bx'}
\right)
\right)
$. This yields the result.
\end{proof}
We prove that when the initial data $f^\eps_0$ is smooth, the associated solution $f^\eps$ to \eqref{kinetic:eq} is regular.
\begin{proposition}\label{classical solution} Under the assumptions of Theorem \ref{WP mean field eq}, suppose in addition that $f^\eps_0$ lies in $\scC^0
\left(
K,\scC^\infty_c
\left(
\R^2
\right)
\right)
$ and that $N$ meets the following assumptions
\begin{equation}\label{hyp N reg}
    \sup_{|v| \geq 1} 
    \left| 
    N'(v)\right| | v |^{1-p} 
     <  + \infty , \qquad \sup_{|v| \geq 1} 
    \left|N'''(v)\right| 
    | v |^{-p'}
     <  + \infty .
\end{equation}
where $p$ is given in assumption \eqref{hyp:N}.
Then the solution $f^\eps$ to equation \eqref{kinetic:eq} provided by Theorem \ref{WP mean field eq} verifies 
\[
f^\eps\in
L^{\infty}_{loc}
\left(
\R^+\times K , 
W^{2,1}
\left(
\R^2
\right)
\right) ,\quad
\partial_t f^\eps\in
L^{\infty}_{loc}
\left(
\R^+\times K , 
L^{1}
\left(
\R^2
\right)
\right) .
\]
\end{proposition}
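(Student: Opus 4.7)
The plan is to view equation \eqref{kinetic:eq}, for each fixed $\bx\in K$, as a linear Fokker-Planck-transport equation on $\R^+\times\R^2$ in the unknown $(t,\bu)\mapsto f^\eps(t,\bx,\bu)$, whose coefficients are determined by the already-constructed solution from Theorem~\ref{WP mean field eq}. Expanding the divergence into non-divergence form, one rewrites
\[
\partial_t f^\eps - \partial_v^2 f^\eps + \tilde{B}^\eps(t,\bx,\bu)\,\partial_v f^\eps + A(\bu)\,\partial_w f^\eps + \tilde{C}^\eps(t,\bx,v)\,f^\eps = 0,
\]
with $\tilde{B}^\eps = N(v) - w - \mathcal{K}_{\Psi}[f^\eps] - \tfrac{1}{\eps}\rho_0^\eps(v-\cV^\eps)$ and $\tilde{C}^\eps = N'(v) - \Psi *_r\rho_0^\eps - b - \tfrac{1}{\eps}\rho_0^\eps$. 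By Lemma~\ref{WP2} and assumption \eqref{hyp N reg}, these coefficients are $\scC^\infty$ in $\bu$ with at most polynomial growth (uniformly on $[0,T]\times K$) and continuous in $(t,\bx)$.

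I would first regularize by replacing $N$ with a sequence $N_k\in\scC^\infty(\R)$ coinciding with $N$ on $[-k,k]$ but with globally bounded derivatives up to order $3$. Starting from the smooth compactly supported initial datum $f_0^\eps$, the modified equation---for which the diffusion in $v$ is non-degenerate and the transport in $w$ is governed by a smooth linear ODE---admits a unique classical solution $f^\eps_k\in\scC^\infty$ by standard parabolic theory, while stability in the sense of Definition~\ref{notion de solution} gives $f^\eps_k\to f^\eps$ as $k\to+\infty$. It thus suffices to establish uniform-in-$k$ bounds on $f^\eps_k$ in $L^\infty_{loc}(\R^+\times K, W^{2,1}(\R^2))$.

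The core of the argument is a simultaneous bootstrap between $L^1_\bu$ bounds on derivatives and polynomial $\bu$-moments. For each multi-index $\alpha$ with $|\alpha|\leq 2$, $g_\alpha:=\partial^\alpha f^\eps_k$ satisfies a linear PDE with the same principal part as above, plus lower-order terms involving $N', N'', N'''$ bounded by polynomial weights via \eqref{hyp:N}, \eqref{hyp N'} and \eqref{hyp N reg}. Multiplying by a smooth regularization of $\sgn(g_\alpha)$ and integrating over $\bu\in\R^2$, the diffusion contributes nonnegatively, the transport drift yields a null total-divergence term after integration by parts, and the remaining contributions are bounded by a linear combination of $\|g_\beta\|_{L^1_\bu}$ for $|\beta|\leq|\alpha|$ against polynomial moments of $f^\eps_k$. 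Those moments are in turn obtained by testing the equation against $\langle\bu\rangle^{2m}$: the confining behaviour of $N$ from \eqref{hyp:N} together with the strong restoring term $\tfrac{1}{\eps}\rho_0^\eps(v-\cV^\eps)$ provide the coercivity needed to close a Grönwall estimate uniformly in $k$ and locally in $t$.

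Passing to the limit $k\to+\infty$ then yields $f^\eps\in L^\infty_{loc}(\R^+\times K, W^{2,1}(\R^2))$, and reinjecting these bounds into the non-divergence form of the equation immediately gives $\partial_t f^\eps \in L^\infty_{loc}(\R^+\times K, L^1(\R^2))$. The main obstacle is precisely closing the simultaneous bootstrap: the polynomial growth of $\tilde{B}^\eps$ and of the coefficients produced by differentiating $N$ prevents the individual $L^1_\bu$ estimates on $g_\alpha$ from being self-contained, and one must carefully couple them with the moment estimates, whose control hinges on the coercive structure of the drift.
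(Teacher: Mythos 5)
Your overall scheme is in the right neighbourhood of the paper's proof (iterated $L^1$-type estimates on derivatives, confining structure of $N$), but the crucial mechanism that makes the derivative estimates close is not correctly identified, and as described the plan does not go through.

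The gap is in the handling of the zeroth-order term $-N'(v)\,\partial_v f^\eps$ that appears when you differentiate the equation in $v$. When you multiply the equation for $g_\alpha$ by a regularization of $\sgn(g_\alpha)$ and integrate, this term contributes $-\int N'(v)\,|g_\alpha|\,\dD\bu$. Under \eqref{hyp:N}, $N'(v)\to-\infty$ polynomially as $|v|\to\infty$, so this integrand is \emph{nonnegative} and of order $|v|^{p-1}|g_\alpha|$ at infinity: it is a genuine growth term that cannot be dominated by $\|g_\alpha\|_{L^1_\bu}$ or by polynomial moments of $f^\eps_k$ alone. Your bootstrap couples unweighted $L^1_\bu$ norms of the $g_\alpha$ with moments of $f^\eps_k$, which is precisely where the estimate fails to be self-contained. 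The paper instead performs \emph{weighted} $L^1$ estimates on the derivatives themselves, in $L^1(\omega_q)$ with $\omega_q(\bu)=1+|\bu|^q$; the drift acting on the weight produces a favourable confining term $q\,\mathds{1}_{|v|\geq1}N(v)/v$ (Lemma~\ref{estime op A}), and it is the \emph{combination} $q\,\mathds{1}_{|v|\geq1}N(v)/v - N'(v)$, bounded above for $q$ large enough thanks to \eqref{hyp:N} together with the new assumption $\sup_{|v|\geq1}|N'(v)||v|^{1-p}<\infty$ in \eqref{hyp N reg}, that absorbs the bad sign of $-N'(v)$. This is the single idea your proposal is missing; without it the Gr\"onwall estimate for $g_\alpha$ does not close.

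A secondary concern is the regularization $N\rightsquigarrow N_k$ followed by a limit $k\to\infty$. Since \eqref{kinetic:eq} is nonlinear through $\cV^\eps$ and $\mathcal{K}_\Psi[f^\eps]$, stability of the solution under perturbation of $N$ is not provided by Definition~\ref{notion de solution} or by \cite{BF} (the quoted continuity result is only with respect to the initial datum). You could avoid this by freezing the coefficients (as you do in the first paragraph) and invoking uniqueness for the resulting linear equation, but that uniqueness is itself not free; the paper sidesteps the issue entirely by running the a priori estimate directly on the solution and its derivatives in the weighted spaces $L^1(\omega_q)$.
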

We postpone the proof to Appendix \ref{regularity estimates}: it is mainly technical and relies on moment estimates on the derivatives of $f^\eps$.\\

For self-consistency, we recall a result from \cite{BF} about the control of the macroscopic quantities 
$\ds
\left(
\cV^\eps , \cW^\eps
\right)
$ defined by \eqref{macro:q} and the error term $\ds \mathcal{E}
\left(
f^\eps
\right)
$ defined by \eqref{error}. We also provide uniform estimates with respect to $\eps$ for the moments of $f^\eps$ and  for the relative energy given by
\begin{equation*}
\left\{
\begin{array}{l}
\ds M_{q}
\left[ f^\eps 
\right](t,\bx)
 := \frac{1}{\rho_0^\eps(\bx)} \int_{\R^2}
|\bu|^{q}
 
f^\eps(t,\bx,\bu) \dD \bu ,
\\[1.1em]
\ds D_{q}
\left[ f^\eps 
\right](t,\bx) := 
\frac{1}{\rho_0^\eps(\bx)} 
\int_{\R^2} |v- \cV^\eps(t,\bx)|^{q}  
f^\eps(t,\bx,\bu) \dD \bu ,
\end{array}\right.
\end{equation*}
where $q\geq 2$.
\begin{theorem}[\cite{BF}]\label{th:preliminary}
Under assumptions \eqref{hyp:N}-\eqref{hyp2:f0} and under the additional assumptions of Theorem \ref{WP mean field eq}, consider the solutions $f^\eps$ to \eqref{kinetic:eq} provided by Theorem \ref{WP mean field eq} and the solution
$
\cU
$  to \eqref{macro:eq}. Furthermore, define the initial macroscopic error as
\[
\mathcal{E}_{\mathrm{mac}}
 = 
\left\| 
\mathcal{U}_0
 - 
\mathcal{U}_0^\eps 
\right\|_{L^{\infty}(K)}
 + 
\| 
\rho_0
-
\rho_0^\eps 
\|_{L^{\infty}(K)} .
\]
There exists 
$\ds (C , \eps_0)  \in  
\left(
\R^+_*
\right)^2
$
such that
\begin{enumerate}
    \item\label{cv macro q} for all $\eps \leq \eps_0$, it holds
\[
\left\| 
\mathcal{U}(t)
 - 
\mathcal{U}^\eps(t) 
\right\|_{L^{\infty}(K)}
 \leq 
C 
\min
{
\left( 
e^{C t}
\left( 
\mathcal{E}_{\mathrm{mac}} 
+ 
\eps 
\right) , 
1 
\right)} ,
\quad\quad
\forall t \in \R^+ ,
\]
where $\cU^\eps$ and $\cU$ are respectively given by \eqref{macro:q} and \eqref{macro:eq}.
\item\label{estimate moment mu} For all $\eps >0$ and all $q$ in 
$
\ds
[2, 2(p+p')]$ it holds
\[
M_{q}[ f^\eps ](t,\bx)
  \leq  
C , \quad\quad\forall  (t,\bx)  \in \R^+\times K,
\]
where exponent $p$ is given in assumption \eqref{hyp:N}. In particular, $\cU^\eps$, $\partial_t\,\cU^\eps$ and $\cE(f^\eps)$ are uniformly bounded with respect to both 
$\ds(t , \bx) \in \R^+\times K$ and
$\eps$, where $\ds \mathcal{E}$ is defined by \eqref{error}.
\item\label{estimate rel energy mu} For all $\eps >0$ and all $q$ in 
$
\ds
[ 2, 2(p+p') ]$ it holds
\[
D_q[ f^\eps ](t,\bx)
  \leq  
C  \left[ 
D_q[ f^\eps ](0,\bx) \exp \left(-q m_* \frac{t}{\eps}  \right)
 + 
\eps^{\frac{q}{2}} \right] , \quad\forall (t,\bx)\in\R^+\times K .
\]
\end{enumerate}
In this theorem, constants $C$ and $\eps_0$ only depend on $m_p$, $\ols{m}_p$, $m_*$ (see
\eqref{hyp:rho0}-\eqref{hyp2:f0} ) and on the data of the problem $N$, $A$ and $\Psi$.
\end{theorem}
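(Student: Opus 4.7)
The plan is to establish the three items in the natural order imposed by the dependencies: first the uniform moment bounds (item \ref{estimate moment mu}), then the relative-energy dissipation (item \ref{estimate rel energy mu}), and finally the macroscopic convergence (item \ref{cv macro q}). To prove item \ref{estimate moment mu}, I would test \eqref{kinetic:eq} against $|\bu|^q/\rho_0^\eps$ for $q \in [2, 2(p+p')]$ and integrate by parts in $\bu$. Two dissipative sources emerge: the confining nonlinearity $N$ contributes, via \eqref{hyp:N}, a negative term of order $-\int |v|^{p+q}\,f^\eps/\rho_0^\eps\, d\bu$ at infinity, and the local interaction $\frac{\rho_0^\eps}{\eps}\partial_v[(v-\cV^\eps)f^\eps]$ contributes a further good term once combined with $\rho_0^\eps \geq m_*$ from \eqref{hyp:rho0}. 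All remaining contributions, namely the Laplacian in $v$, the adaptation dynamics $A(\bu)$, and the non-local kernel $\mathcal{K}_\Psi$ controlled via \eqref{hyp:psi}, are bounded by a polynomial in $M_q$ itself, with the non-local term requiring the spatial integrability afforded by \eqref{hyp2:f0}. A Gronwall argument starting from \eqref{hyp1:f0} closes the bound uniformly in $\eps$ up to order $q = 2(p+p')$, and the uniform bounds on $\cU^\eps$, $\partial_t \cU^\eps$ and $\cE(f^\eps)$ then follow by direct inspection of their definitions \eqref{macro:q}, \eqref{macro-eps:eq} and \eqref{error}.

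For item \ref{estimate rel energy mu}, I would differentiate $D_q[f^\eps]$ in time. The fact that $\rho_0^\eps$ is time-independent leaves only two contributions, one from substituting \eqref{kinetic:eq} and one from the derivative hitting $\cV^\eps$. The crucial identity is that an integration by parts in $v$ yields
\[
\frac{1}{\rho_0^\eps}\int |v-\cV^\eps|^q \cdot \frac{\rho_0^\eps}{\eps}\partial_v\!\left[(v-\cV^\eps)f^\eps\right] d\bu \;=\; -\,\frac{q\,\rho_0^\eps}{\eps}\,D_q[f^\eps],
\]
a dissipation dominating $-\frac{qm_*}{\eps}D_q$ thanks to \eqref{hyp:rho0}. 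All other terms, stemming from $N$, $A$, the Laplacian, $\partial_t \cV^\eps$ (uniformly bounded by item \ref{estimate moment mu}) and $\mathcal{K}_\Psi$, must be split between the dissipation and an $\eps$-small remainder. This is the key technical step: using H\"older interpolation between $D_q$ itself and the higher moments $M_{q'}$ already controlled by item \ref{estimate moment mu}, each unwanted contribution splits into a small fraction of $D_q/\eps$ absorbed into the dissipation and a moment-bounded residual of size $\eps^{q/2-1}$. A linear ODE comparison $\dot D_q \leq -\frac{qm_*}{\eps}D_q + C\,\eps^{q/2-1}$ then yields the announced bound.

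For item \ref{cv macro q}, I would subtract \eqref{macro:eq} from \eqref{macro-eps:eq} and bound each contribution. The only nonstandard term is the error $\cE(f^\eps)$: a second-order Taylor expansion of $N$ around $\cV^\eps$ rewrites it as
\[
\cE(f^\eps) \;=\; \frac{1}{2\,\rho_0^\eps}\int N''(\xi)\,(v-\cV^\eps)^2 f^\eps\, d\bu,
\]
the first-order term vanishing by the very definition of $\cV^\eps$. Assumption \eqref{hyp N'} then gives $|N''(\xi)| \lesssim 1 + |v|^{p'}$, so H\"older's inequality bounds the integral by a product of a moment $M_{2(p+p')}[f^\eps]$ (item \ref{estimate moment mu}) and $D_q[f^\eps]$ for some $q \geq 2$ (item \ref{estimate rel energy mu}), yielding $|\cE(f^\eps)| \leq C\eps$ once the exponentially dying initial layer has passed. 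The non-local difference $\mathcal{L}_{\rho_0^\eps}[\cV^\eps] - \mathcal{L}_{\rho_0}[\cV]$ splits by triangle inequality into pieces controlled by $\|\rho_0-\rho_0^\eps\|_\infty$ and $\|\cV^\eps - \cV\|_\infty$ using \eqref{hyp:psi}, and the local Lipschitz character of $N$ and $A$ on the uniformly bounded range of $\cU^\eps, \cU$ (from item \ref{estimate moment mu}) closes a Gronwall inequality, giving the rate $Ce^{Ct}(\mathcal{E}_{\mathrm{mac}} + \eps)$; the alternative bound $1$ is trivial from uniform boundedness. The main obstacle I anticipate is item \ref{estimate rel energy mu}: closing Gronwall at the sharp rate $\eps^{q/2}$ forces \emph{every} contaminating contribution from $N$, $A$ and $\mathcal{K}_\Psi$ to be split between the dissipation and a moment-controlled remainder with the \emph{correct} power of $\eps$, which is delicate because $N$ is not globally Lipschitz; the natural bound $|N(v) - N(\cV^\eps)| \lesssim (1 + |v|^{p'})|v-\cV^\eps|$ is only marginally integrable against the relative energy, and this is precisely what forces the moment order in \eqref{hyp1:f0} to reach $2(p+p')$.
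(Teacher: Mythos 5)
You should first be aware that the paper does not prove this statement at all: Theorem \ref{th:preliminary} is imported verbatim from \cite{BF} (``For self-consistency, we recall a result from \cite{BF}\ldots''), so there is no internal proof to compare your argument against; the closest internal trace of the method is Appendix \ref{regularity estimates}, whose Lemma \ref{estime op A} invokes \cite[Proposition 3.1]{BF} and works with exactly the kind of weighted moment estimates and confining-drift sign arguments you propose. Judged on its own, your sketch is a credible reconstruction: the integration-by-parts identity giving the dissipation $-\tfrac{q\,m_*}{\eps}D_q$ is correct, the ODE comparison $\tfrac{\dD}{\dD t}D_q\leq -\tfrac{qm_*}{\eps}D_q+C\eps^{q/2-1}$ does yield the stated bound in item \eqref{estimate rel energy mu}, and in item \eqref{cv macro q} the Taylor trick $\cE(f^\eps)=\tfrac{1}{2\rho_0^\eps}\int N''(\xi)(v-\cV^\eps)^2f^\eps\,\dD\bu$ (the linear term vanishing by definition of $\cV^\eps$) combined with items \eqref{estimate moment mu}--\eqref{estimate rel energy mu} and a Gronwall argument is the natural route to the rate $Ce^{Ct}(\mathcal{E}_{\mathrm{mac}}+\eps)$; the time integral of the initial-layer contribution $D_q[f^\eps](0,\cdot)\exp(-qm_*t/\eps)$ is $O(\eps)$, which is what actually makes this work, and you should say so explicitly rather than ``once the layer has passed''.

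Two points deserve correction or sharpening. First, in item \eqref{estimate moment mu} the bound is claimed uniformly on $\R^+\times K$, and ``a Gronwall argument'' cannot deliver that: it gives $e^{Ct}$ growth. You must close instead with a dissipative differential inequality, using the confining term $-c\int|v|^{p+q}f^\eps\,\dD\bu$ coming from \eqref{hyp:N}, the $-b|w|^2$ relaxation hidden in $A$, and (for the cross terms) Young's inequality, so that $\tfrac{\dD}{\dD t}M_q\leq C-cM_q^{\theta}$ for some $\theta\geq1$; only then is the bound global in time. Second, the ``main obstacle'' you flag in item \eqref{estimate rel energy mu} --- the marginal integrability of $|N(v)-N(\cV^\eps)|\lesssim(1+|v|^{p'})|v-\cV^\eps|$ against the relative energy --- has a cleaner resolution than H\"older interpolation against high moments: use the one-sided estimate $(v-\cV^\eps)\left(N(v)-N(\cV^\eps)\right)\leq C-\tfrac1C|v-\cV^\eps|^{p+1}$, which follows from \eqref{hyp:N} and the uniform bound on $\cV^\eps$, and which gives the $N$-contribution a favorable sign in the evolution of $D_q$; this is precisely the inequality the present paper exploits in the proof of Lemma \ref{lemme technique s s solution}. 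With these two adjustments your outline matches, in structure and in the role played by each hypothesis \eqref{hyp:N}--\eqref{hyp2:f0}, what the cited proof in \cite{BF} is expected to do.
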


We now complement this result with an additional estimate on the derivative of the error term, that will be used in a later proof.

\begin{proposition}\label{estimate:derivee:erreur}
For all $\eps >0$  and all $(t,\bx) \in \R^+\times K$ we have
\[
\left|
\frac{\dD}{\dD t} 
\mathcal{E}(f^\eps
\left(t,\bx,\cdot 
\right))\right| 
 \leq 
 C \left[
 \left(
 D_2
 +
 D_{p'+1}\right)
 [f^\eps](0,\bx) 
 \eps^{-1} 
 \exp \left(-2 m_* \frac{t}{\eps} \right)
 + 
1 \right] ,
\] 
which simplifies into 
\[
\left|
\frac{\dD}{\dD t} 
\mathcal{E}(f^\eps
\left(t,\bx,\cdot 
\right))\right| 
 \leq 
 C ,
\] 
as soon as assumption \eqref{hyp:f eps 0 well prepared} is fulfilled.

\end{proposition}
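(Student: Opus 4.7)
The plan is to compute $\frac{\dD}{\dD t}\mathcal{E}(f^\eps)$ explicitly from the equation, isolate the one contribution that is singular in $\eps$, and control it via the relative-energy decay of Theorem~\ref{th:preliminary}~\eqref{estimate rel energy mu}. I start by multiplying \eqref{kinetic:eq} by $N(v)$, integrating in $\bu$, and integrating by parts: Lemma~\ref{WP2} applied to $\varphi(\bu)=N(v)$ (polynomial growth of order $p$) makes this rigorous and yields a strong form for $\partial_t\int_{\R^2}N(v)f^\eps\dD\bu$. Combined with $\rho_0^\eps\partial_t\cE(f^\eps)=\partial_t\int N(v)f^\eps\dD\bu-\rho_0^\eps N'(\cV^\eps)\partial_t\cV^\eps$ and the cancellation $\int(v-\cV^\eps)f^\eps\dD\bu=0$ (used to subtract $N'(\cV^\eps)$ inside the singular integral), this yields
\[
\rho_0^\eps\,\frac{\dD}{\dD t}\mathcal{E}(f^\eps)\;=\;\mathcal{R}(t,\bx)\;-\;\frac{\rho_0^\eps}{\eps}\int_{\R^2}\bigl[N'(v)-N'(\cV^\eps)\bigr](v-\cV^\eps)\,f^\eps\,\dD\bu,
\]
where $\mathcal{R}$ collects the regular contributions $\int N'(v)B^\eps f^\eps\dD\bu$, $\int N''(v)f^\eps\dD\bu$ and $-\rho_0^\eps N'(\cV^\eps)\partial_t\cV^\eps$.

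Each summand of $\mathcal{R}$ is polynomially bounded in $\bu$ with degree at most $p+p'$ thanks to \eqref{hyp:N}--\eqref{hyp N'} and the explicit form \eqref{def:Beps} of $B^\eps$. Consequently, the uniform moment bound $M_q[f^\eps]\leq C$ for $q\in[2,2(p+p')]$ from Theorem~\ref{th:preliminary}~\eqref{estimate moment mu}, combined with the uniform bounds on $\cV^\eps$ and $\partial_t\cV^\eps$ contained in the same item, ensures $|\mathcal{R}(t,\bx)|\leq C$ uniformly in $\eps$ and $(t,\bx)$.

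The main obstacle is the singular term. Since $N'$ is neither uniformly Lipschitz nor uniformly bounded, I would split the domain of integration in $|v-\cV^\eps|$. On $\{|v-\cV^\eps|\leq 1\}$, $v$ lies in a fixed compact set because $\cV^\eps$ is uniformly bounded, so $\mathcal{C}^1$-regularity of $N'$ yields $|[N'(v)-N'(\cV^\eps)](v-\cV^\eps)|\leq C(v-\cV^\eps)^2$. On the complement $\{|v-\cV^\eps|>1\}$, the triangle inequality $|v|^{p'}\leq C(1+|v-\cV^\eps|^{p'})$ together with \eqref{hyp N'} provides $|N'(v)-N'(\cV^\eps)|\leq C(1+|v-\cV^\eps|^{p'})\leq C|v-\cV^\eps|^{p'}$, so after multiplication by $|v-\cV^\eps|$ one obtains $C|v-\cV^\eps|^{p'+1}$. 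This produces precisely the pair of moments appearing in the statement:
\[
\frac{1}{\eps}\int_{\R^2}\bigl|[N'(v)-N'(\cV^\eps)](v-\cV^\eps)\bigr|\,f^\eps\,\dD\bu\;\leq\;\frac{C\rho_0^\eps}{\eps}\bigl(D_2+D_{p'+1}\bigr)[f^\eps](t,\bx).
\]

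To conclude, I would apply Theorem~\ref{th:preliminary}~\eqref{estimate rel energy mu} with $q\in\{2,p'+1\}$: since $q\geq 2$, one has $e^{-qm_*t/\eps}\leq e^{-2m_*t/\eps}$ and $\eps^{q/2-1}\leq C$ for $\eps\leq 1$, so that $\eps^{-1}(D_2+D_{p'+1})[f^\eps](t,\bx)$ is bounded by $C[(D_2+D_{p'+1})[f^\eps](0,\bx)\eps^{-1}e^{-2m_*t/\eps}+1]$, which is the first announced estimate. The simplified bound then follows immediately from \eqref{hyp:f eps 0 well prepared}, which guarantees $D_2(0,\bx)+D_{p'+1}(0,\bx)\leq C\eps$ and thereby cancels the singular prefactor $\eps^{-1}$. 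I expect the dyadic splitting to be the only subtle point of the proof: both the local Lipschitz and the polynomial bounds are necessary, the former producing $D_2$ and the latter producing $D_{p'+1}$, in exact correspondence with the well-preparedness hypothesis.
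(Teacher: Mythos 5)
Your proposal is correct and follows essentially the same route as the paper's proof: both decompose $\frac{\dD}{\dD t}\mathcal{E}(f^\eps)$ into the stiff term $-\frac{1}{\eps}\int\left(N'(v)-N'(\cV^\eps)\right)(v-\cV^\eps)f^\eps\,\dD\bu$ plus uniformly bounded remainders, bound the stiff term by $\frac{C}{\eps}\left(D_2+D_{p'+1}\right)[f^\eps]$, and then invoke items \eqref{estimate moment mu} and \eqref{estimate rel energy mu} of Theorem~\ref{th:preliminary}. Your near/far splitting $\{|v-\cV^\eps|\leq 1\}$ versus $\{|v-\cV^\eps|>1\}$ merely makes explicit what the paper leaves implicit in its appeal to assumption \eqref{hyp N'}.
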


\begin{proof}
We compute the derivative of $\mathcal{E}(f^\eps)$ taking the difference between equation \eqref{kinetic:eq} multiplied by $N / \rho_0^\eps$ and integrated with respect to $\bu$, and the first line of \eqref{macro-eps:eq} multiplied by 
$
N'(\cV^\eps)
$. After integrating by part with respect to $v$, it yields
\begin{equation}\label{tderivative:error}
\frac{\dD}{\dD t} \mathcal{E}(f^\eps
\left(t,\bx,\cdot 
\right)) = 
\cA + \cB ,
\end{equation}
where $\cA$ and $\cB$ are given by
\begin{equation*}
\left\{
\begin{array}{l}
\ds
\cA = 
- \frac{1}{\eps} 
\int_{\R^2} 
N'(v) 
\left( v- \cV^\eps(t,\bx) 
\right) 
f^\eps(t,\bx,\bu) \dD \bu ,
\\[1.1em]
\ds \cB
 = \frac{1}{\rho_0^\eps} \int_{\R^2}
\left[ 
\left(
N' 
B^\eps
 
+
 
N''
\right) f^\eps 
\right](t,\bx,\bu) \dD \bu
 - 
N'(\cV^\eps) 
\left(
B^\eps
\left(t , \bx , \cU^\eps\right)
 + 
\cE( f^\eps )\right)
 .
\end{array}\right.
\end{equation*}

The main difficulty here consists in estimating the stiffer term $\cA$: this is what we start with. According to the definition of $\cV^\eps$, we have
\[
\cA = 
- \frac{1}{\eps} 
\int_{\R^2} 
\left(
N'(v) - 
N'
\left(\cV^\eps\right)
\right)
\left(v- \cV^\eps
\right) 
f^\eps(t,\bx,\bu) \dD \bu .
\]
Therefore, relying on our regularity assumptions on $N$, assumption \eqref{hyp N'} and item \eqref{estimate moment mu} of Theorem \ref{th:preliminary}, which ensures that $\cV^\eps$ is uniformly bounded with respect to both $(t,\bx) \in \R^+\times K$ and $\eps > 0$, we obtain some constant $C$ such that 
\[
\cA
 \leq 
\frac{C}{\eps} \left(
D_{p'+1}[ f^\eps ]
+
D_{2}[ f^\eps ]
\right) ,
\]
where we also used assumption \eqref{hyp:rho0} to bound $\rho_0^\eps$. Hence, we apply item \eqref{estimate rel energy mu} of Theorem \ref{th:preliminary} and deduce
\[
\cA
 \leq 
 C \left[ 
 \left(
 D_2
 +
 D_{p'+1}\right)
 [ f^\eps ](0,\bx) 
 \eps^{-1} 
 \exp \left(-2 m_* \frac{t}{\eps}  \right)
 + 
1 \right] ,
\]
for all $(t,\bx) \in \R^+\times K$ and $\eps > 0$. 

According to assumptions \eqref{hyp:N} and \eqref{hyp N'}, $\cB$ only displays moments of $f^\eps$ up to order $2(p+p')$, which are uniformly bounded with respect to both $(t,\bx) \in \R^+\times K$ and $\eps > 0$, according to item \eqref{estimate moment mu} of Theorem \ref{th:preliminary}. On top of that both $\cU^\eps$ and $\cE
\left( f^\eps \right)
$ are also uniformly bounded according to item \eqref{estimate moment mu} in Theorem \ref{th:preliminary}. Furthermore, according to assumptions \eqref{hyp:psi} and \eqref{hyp:rho0} on $\Psi$ and $\rho_0^\eps$, $\Psi*_r\rho^\eps_0$ is uniformly bounded as well. Therefore, there exists a constant $C$ such that
\[
\cB
  \leq  
C , 
\]
for all $(t,\bx) \in \R^+\times K$ and $\eps > 0$. 

We obtain the expected result gathering the estimates obtained on $\cA$ and $\cB$.
\end{proof}

\section{Proof of Theorem \ref{th:main}}\label{A priori estimates}

In this section, we derive uniform $L^{\infty}$ convergence estimates for the solution $\phi^\eps$ to equation \eqref{HJ eq ordre 0}. To do so, our strategy consists in performing a Hilbert expansion of $\phi^\eps$ with respect to $\eps$ and to prove that the higher order terms are uniformly bounded with respect to $\eps$. Denote by $\phi^\eps_1$ the correction of order $1$ in the expansion of $\phi^\eps$ in the regime of strong interactions
\begin{equation}\label{def phi eps 1}
\phi^\eps = -\frac{1}{2} 
\rho_0^\eps \left| v-\cV^\eps \right|^2 + 
\eps 
\phi^\eps_1 ,
\end{equation}
Plugging this \textit{ansatz} in \eqref{HJ eq ordre 0}, we find that
$\phi^\eps_1$ solves the following equation
\begin{equation}\label{HJ eq ordre 1}
 \ds\partial_t \phi^\eps_1 + \nabla_{\bu} \phi^\eps_1 \cdot \mathbf{b}^\eps + \mathrm{div}_{\bu} \left[  \mathbf{b}^\eps  \right] -
       \partial^2_v 
        \phi^\eps_1
       -
        \left| 
        \partial_v  
        \phi^\eps_1
         \right|^2
 + 
\frac{1}{\eps}\,\rho_0^\eps \left( v - \cV^\eps
\right) \partial_v  \left( \phi^\eps_1  
 -  \overline{\phi^\eps_1}\right)
 = 
0 ,
\end{equation}
where $\overline{\phi^\eps_1}$ is given by
\[
\overline{\phi^\eps_1}
\left(t,\bx,\bu\right)
 = 
n(v)
 - 
n(\cV^\eps)
 - 
\left(
v-\cV^\eps
\right)
\left(
N(\cV^\eps)
 + 
(w-\cW^\eps)
 + \cE(f^\eps)
 + 
\frac{1}{2} 
\Psi*_r \rho_0^\eps 
\left(
v-\cV^\eps
\right)
\right) ,
\]
where $n$ is the primitive of $N$ given in assumption \eqref{hyp:HJ}.
Keeping the leading order, we expect that $\phi^\eps_1$ will look like $\overline{\phi^\eps_1}$. Therefore, in the following lemma, we look for sub and super-solutions to equation \eqref{HJ eq ordre 1} with the form $\overline{\phi^\eps_1}+\psi$, where $\psi$ needs to be determined. This is done in the following lemma, which is the keystone of our analysis.
\begin{lemma}\label{lemme technique s s solution}
Consider some positive constant $\alpha_0$ and define $\psi$ as follows
\[
\psi\left(t,\bx,\bu\right)
 = 
\frac{\alpha_0}{2} 
\left|
v-\cV^\eps(t,\bx)
\right|^2
 + 
\frac{\alpha(t)}{2} 
\left|
w-\cW^\eps(t,\bx)
\right|^2 ,
\]
where $\alpha$ is given by
\[
\alpha(t) = 
\alpha_0 
e^{
2 (|a| + b) t
}
 + 
\frac{1}{|a| + b} 
 \left( 
e^{
2 (|a| + b) t
}
 - 
1
 \right) .
\]
The following functions
\begin{equation*}
\chi_+ = \overline{\phi^\eps_1}+ \psi+ m  ,
 \qquad \chi_- = \overline{\phi^\eps_1} - \psi - m.
\end{equation*} 
are respectively \textbf{super} and \textbf{sub}-solutions to equation \eqref{HJ eq ordre 1},  where $\ds \left( t \mapsto m(t) \right)$ is given by
\[
m(t) = m_0 + C\,\exp{\left( 6 (a + b) t \right)} ,
\]
for all $m_0 \in \R$ and where the constant $C$ only depends on $\alpha_0$, the constants in  \eqref{hyp:rho0}-\eqref{hyp2:f0} and \eqref{hyp:f eps 0 well prepared}, and the data of the problem $N$, $A$ and $\Psi$.
\end{lemma}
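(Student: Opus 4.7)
The plan is to verify by direct substitution that $\chi_+$ renders the left-hand side of \eqref{HJ eq ordre 1} nonnegative (hence is a super-solution), the argument for $\chi_-$ being strictly symmetric with reversed signs. The strategy hinges on two algebraic cancellations that make the singular $1/\eps$ contributions manageable. First, since $\chi_+ - \overline{\phi^\eps_1} = \psi + m$, one has $\partial_v(\chi_+ - \overline{\phi^\eps_1}) = \alpha_0(v - \cV^\eps)$, so the singular term collapses to $\frac{\alpha_0 \rho_0^\eps}{\eps}(v-\cV^\eps)^2 \geq \frac{\alpha_0 m_*}{\eps}(v-\cV^\eps)^2$ by \eqref{hyp:rho0}, which will serve as the principal source of positivity. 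Second, a direct computation combined with the first line of \eqref{macro-eps:eq} yields the identity $B^\eps - \partial_v \overline{\phi^\eps_1} = \partial_t \cV^\eps$, so that $\partial_v\chi_+\cdot B^\eps - |\partial_v \chi_+|^2 = \partial_v\chi_+ \cdot \bigl(\partial_t\cV^\eps - \alpha_0(v-\cV^\eps)\bigr)$. This removes any otherwise uncontrollable $|v|^{2p}$ growth from the nonlinear term.

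After these two simplifications, I would expand $\partial_t\chi_+ + \partial_w\chi_+\,A + \mathrm{div}_\bu\mathbf{b}^\eps - \partial_v^2\chi_+$ and sort the resulting expression into three groups. Group (i) collects the dissipative terms with a definite favourable sign, chiefly $-\alpha_0(v-\cV^\eps)\bigl(N(v)-N(\cV^\eps)\bigr)$, which by assumption \eqref{hyp:N} provides a contribution bounded below by a positive multiple of $|v|^{p+1}$ (with $p\geq 2$) outside a compact set, up to a bounded remainder. Group (ii) collects the quadratic-in-$(v-\cV^\eps)$ terms (from $-|\partial_v\psi|^2 = -\alpha_0^2(v-\cV^\eps)^2$, from $\alpha_0(v-\cV^\eps)\,\Psi*_r\rho_0^\eps(v-\cV^\eps)$, and from Young's inequality applied to the cross contributions); all of these are absorbed by the positive singular term above provided $\eps$ is below some threshold depending only on $\alpha_0$, $m_*$, and $\|\Psi\|$. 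Group (iii) gathers the lower-order terms of at most polynomial growth $|v|^{p'}$ coming from $N'$, $N''$, the derivatives of $\overline{\phi^\eps_1}$, and the time derivatives of the macroscopic quantities; all are uniformly bounded thanks to Theorem \ref{th:preliminary} and Proposition \ref{estimate:derivee:erreur}, and are absorbed either by group (i) outside a compact $v$-set or by $m(t)$ within it.

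The $w$-sector is handled by the component $\frac{\alpha(t)}{2}(w-\cW^\eps)^2$ of $\psi$: gathering the $w$-quadratic contributions from $\partial_t\psi$, from $\partial_w\psi\,A(\bu)$ (using $\partial_t\cW^\eps = A(\cU^\eps)$ and the affine structure $A(\bu) - A(\cU^\eps) = a(v-\cV^\eps) - b(w-\cW^\eps)$), and from Young applied to $\alpha a(v-\cV^\eps)(w-\cW^\eps)$, one obtains a coefficient in front of $(w-\cW^\eps)^2$ equal to $\tfrac{\alpha'}{2} - \alpha(|a|+b)$, plus an extra $(v-\cV^\eps)^2$ remainder once again absorbed by the singular positive term. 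The explicit formula for $\alpha(t)$ satisfies $\alpha'(t) = 2(|a|+b)\alpha(t) + 2$, which makes this coefficient equal to one and thereby absorbs the $O(1)$ residues in the $w$-direction. The exponentially growing $m(t) = m_0 + C\exp(6(a+b)t)$ is then chosen with $C$ large enough to dominate all remaining $(t,\bx)$-dependent residues, which themselves grow at most exponentially by Theorem \ref{th:preliminary} and Proposition \ref{estimate:derivee:erreur}. The main obstacle throughout is the bookkeeping of the polynomial growths in $v$: the high-order term $|\partial_v\chi_+|^2 \sim N(v)^2$ cannot be estimated pointwise and is tamed only thanks to the cancellation $B^\eps - \partial_v\overline{\phi^\eps_1} = \partial_t\cV^\eps$, which is precisely the reason behind the specific algebraic form chosen for $\overline{\phi^\eps_1}$.
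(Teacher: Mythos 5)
Your two structural observations are exactly the paper's: the singular term reduces to $\tfrac{\alpha_0}{\eps}\rho_0^\eps|v-\cV^\eps|^2$ because $\partial_v(\chi_\pm-\overline{\phi^\eps_1})=\pm\alpha_0(v-\cV^\eps)$; and the identity $\partial_v\overline{\phi^\eps_1}=B^\eps-\partial_t\cV^\eps$ kills the otherwise uncontrollable $N(v)^2$ growth in $|\partial_v\chi_\pm|^2$. The $w$-sector treatment via the ODE $\alpha'=2(|a|+b)\alpha+2$ and the coercivity $(v-\cV^\eps)(N(v)-N(\cV^\eps))\leq C-\tfrac1C|v-\cV^\eps|^{p+1}$ are also exactly what the paper uses.

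There is, however, one genuine gap in your absorption bookkeeping. You propose to absorb the $(v-\cV^\eps)^2$ remainder coming from Young on the cross term $\alpha(t)\,a\,(v-\cV^\eps)(w-\cW^\eps)$ by the singular positive term $\tfrac{\alpha_0 m_*}{\eps}(v-\cV^\eps)^2$, and you claim this works ``provided $\eps$ is below some threshold depending only on $\alpha_0$, $m_*$ and $\|\Psi\|$.'' This cannot work: the remainder has prefactor proportional to $\alpha(t)$, and $\alpha(t)\sim e^{2(|a|+b)t}$ grows without bound, so for any fixed $\eps>0$ the required inequality $\tfrac{\alpha_0 m_*}{\eps}\geq\tfrac{|a|}{4}\alpha(t)$ fails for $t$ large; the threshold on $\eps$ would have to be time-dependent and shrink to zero. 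Moreover, the Lemma is stated for every $\eps>0$ (the smallness of $\eps$ only enters later, in Theorem \ref{th:main}), so no $\eps$-threshold is available at this stage at all. The paper's fix is to drop the singular term entirely (it is nonnegative, hence harmless for a super-solution) and instead absorb the $\alpha(t)$-weighted quadratic, together with the other $|v-\cV^\eps|^q$ residues ($q\leq p$), into the $N$-coercivity $\tfrac1C|v-\cV^\eps|^{p+1}$; the price paid is a time-dependent constant that is bounded by $C\exp(6(|a|+b)t)$ for $p\geq 2$, which is precisely the exponential in $m(t)$. Once you reroute the $\alpha(t)(v-\cV^\eps)^2$ term through the $|v-\cV^\eps|^{p+1}$ coercivity instead of the singular term, your proof coincides with the paper's.
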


\begin{proof} The first step of the proof consists in proving that equation \eqref{HJ eq ordre 1} evaluated in $\overline{\phi^\eps_1}$ stays uniformly bounded with respect to $\eps$. This corresponds to estimating the term $\cA$ defined as follows
\begin{equation}\label{def:cA}
\cA\,=\,\partial_t\overline{\phi^\eps_1} + \nabla_{\bu}\overline{\phi^\eps_1}\cdot\mathbf{b}^\eps+\mathrm{div}_{\bu} \left[  \mathbf{b}^\eps  \right] -\partial^2_v \overline{\phi^\eps_1}
-\left| \partial_v\overline{\phi^\eps_1}\right|^2\,.
\end{equation}
To this aim, we compute the derivatives of $\overline{\phi^\eps_1}$. We first notice that equation \eqref{macro-eps:eq} on $\cV^\eps$ can be rewritten
\begin{equation}\label{eq2:Veps}
\partial_t \cV^\eps  =  
        B^\eps(t,\bx,\cU^\eps) +\cE(f^\eps)\,,
\end{equation}
and that therefore $\overline{\phi^\eps_1}$ can be rewritten
\begin{equation*}
\overline{\phi^\eps_1}
\left(t,\bx,\bu\right)=\int_{\cV^\eps}^v \left( B^\eps\left(t,\bx,\Tilde{v},w\right)-\partial_t \cV^\eps \right)
\,\dD \Tilde{v}
\,,
\end{equation*}
where $B^\eps$ is defined in \eqref{def:Beps}. Using these relations, we find
\begin{equation*}
    \left\{
    \begin{array}{llll}
        \displaystyle &\nabla_{\bu} \overline{\phi^\eps_1} &=& \begin{pmatrix}
    B^\eps(t,\bx,\bu)-\partial_t\cV^\eps \\[0,9em]
\ds
-\left(v-\cV^\eps\right)
\end{pmatrix}\,,\\[1,6em]
         \displaystyle &\partial_{v}^2 \overline{\phi^\eps_1} &=& \mathrm{div}_{\bu} \left[  \mathbf{b}^\eps  \right]-\partial_w A(u)\,
         ,\\[0.8em]
        \ds &\partial_t\,\overline{\phi^\eps_1} &=& 
\partial_t \cV^\eps\left(\partial_t \cV^\eps-
B^\eps\left(t,\bx,\cV^\eps,w\right)\right)-(v-\cV^\eps)\partial_t^2 \cV^\eps
\,.
  \end{array}
\right.
\end{equation*}
Relying on the latter relations we rewrite
$\cA$ as follows
\begin{equation*}
 \cA\,=\,
\partial_t \cV^\eps\left(B^\eps\left(t,\bx,\bu\right)-
B^\eps\left(t,\bx,(\cV^\eps,w)\right)\right)-(v-\cV^\eps)\partial_t^2 \cV^\eps
+\partial_w A(u)
-(v-\cV^\eps)A(u)\,.
\end{equation*}
On the one hand, $B^\eps$ is given by \eqref{def:Beps}, it holds
\begin{equation*}
  B^\eps(t,\bx,\bu) - B^\eps(t,\bx,(\cV^\eps,w)) =   N(v) -  N(\cV^\eps) - \Psi*_r\rho^\eps_0  (v - \cV^\eps)\,.
\end{equation*}
On the other hand, using relation \eqref{eq2:Veps}, we obtain
\begin{equation*}
\partial_t^2 \cV^\eps  =  
        \partial_t\,\cU^\eps\cdot \nabla_{\bu} B^\eps(t,\bx,\cU^\eps) +\frac{\dD}{\dD t}\,\cE(f^\eps)\,,
\end{equation*}
which according to \eqref{def:Beps} and equation \eqref{macro-eps:eq} on $\cW^\eps$ can be rewritten
\begin{equation*}
\partial_t^2 \cV^\eps  =  
\partial_t\,\cV^\eps
\left(N'\left(\cV^\eps\right)-\Psi*_r\rho_0^\eps\right)-A\left(\cU^\eps\right)+\frac{\dD}{\dD t}\,\cE(f^\eps)\,.
\end{equation*}
Therefore, $\cA$ may be expressed as follows
\begin{equation*}
 \cA\,=\,\partial_t \cV^\eps\left( N(v) - N\left(\cV^\eps\right)\right) 
 -
\left( v - \cV^\eps \right)\left( \partial_t \cV^\eps N'\left(\cV^\eps\right) + \left( A(u) -A(\cU^\eps)\right)+\frac{\dD}{\dD t}\,\cE(f^\eps) \right)\,+\,\partial_w A\,.
\end{equation*}
On the one hand, $\partial_t \cV^\eps$ and $\cV^\eps$ are uniformly bounded according to item \eqref{estimate moment mu} in Theorem \ref{th:preliminary}. On the other hand, we apply Proposition \ref{estimate:derivee:erreur} which ensures that under the smallness assumption \eqref{hyp:f eps 0 well prepared}, the time derivative of $\cE\left(f^\eps\right)$ is uniformly bounded as well. Consequently, for all positive $\eps$, it holds
\[
\left(\left|\partial_t \cV^\eps\right|+
\left|\,\cU^\eps\right|+\left|\frac{\dD}{\dD t}\,\cE(f^\eps)\right|\right)(t,\bx)
\,\leq\,C\,,\quad\forall\,(t,\bx)\in\R^+\times K\,,
\]
for some constant $C$ depending only on the constants in assumptions 
\eqref{hyp:rho0}-\eqref{hyp2:f0}, \eqref{hyp:f eps 0 well prepared} and on the data of the problem $N$, $A$ and $\Psi$. Therefore, we deduce the following bound for $\cA$ 
\begin{equation*}
\left|\cA\right|\,\leq\,
C\left(1+\left|v-\cV^\eps\right|+\left|v-\cV^\eps\right|^2+\left| N(v)-N(\cV^\eps)\right|\right)+b\,\left|(v-\cV^\eps)(w-\cW^\eps)\right|\,.
\end{equation*}
Then we apply Young's inequality to bound the crossed term between $v$ and $w$ and use assumption \eqref{hyp:N} to bound $N(v)$. In the end, it yields
\begin{equation}\label{estimate:A}
\left|\cA\right|\,\leq\,
C\left(1+\left| v-\cV^\eps\right|^p\right)+\frac{1}{2}\,\left|w-\cW^\eps\right|^2\,.
\end{equation}
Building on this estimate, we can now pass to the heart of the proof and show that $\chi_+$ and $\chi_-$ are respectively super- and sub-solutions to \eqref{HJ eq ordre 1}. We evaluate equation \eqref{HJ eq ordre 1} in $\chi_+$ and $\chi_-$ and obtain
\begin{equation*}
\ds\partial_t\chi_{\pm}+\nabla_{\bu}\chi_{\pm}\cdot\mathbf{b}^\eps+\mathrm{div}_{\bu}\left[\mathbf{b}^\eps  \right] -\partial^2_v \chi_{\pm}-\left| \partial_v  \chi_{\pm}\right|^2+ \frac{1}{\eps}\,\rho_0^\eps \left( v - \cV^\eps\right) \partial_v \chi_{\pm}= \cA\pm\cB\pm m'(t)-\left| \partial_v  \psi\right|^2\,,
\end{equation*}
where $\cA$ is given by \eqref{def:cA} and $\cB$ is given by
\begin{equation}\label{def:cB}
\cB\,=\,\ds\partial_t \psi+\nabla_{\bu} \psi \cdot\mathbf{b}^\eps-\partial^2_v\psi-2\,\partial_v\psi\,\partial_v\overline{\phi^\eps_1}+ \frac{1}{\eps}\,\rho_0^\eps \left( v - \cV^\eps\right) \partial_v  \psi\,.
\end{equation}
In order to conclude that $\chi_+$ and $\chi_-$ are respectively sub- and super-solutions to \eqref{HJ eq ordre 1}, it is sufficient to prove
\[
\cB + m'(t)-\left| \partial_v  \psi\right|^2-\left|\cA\right|\geq0\,.
\]
Therefore, we focus on proving the latter inequality.
To begin with, we have
\begin{align*}
&\cB - \left|   \partial_v  \psi \right|^2   \\
=& -\alpha_0\left(v-\cV^\eps(\bx)\right) \partial_t\cV^\eps+\frac{\alpha'(t)}{2} \left|
w-\cW^\eps(t,\bx)\right|^2 -\alpha(t)  \left(w-\cW^\eps(t,\bx)\right)\partial_t\cW^\eps \\
&+\alpha_0 \left(v-\cV^\eps\right)B^\eps(t,\bx,\bu)+ \alpha(t)\left(w-\cW^\eps\right)A(\bu) - \alpha_0^2 \left(v-\cV^\eps\right)^2 -\alpha_0  +  \frac{\rho_0^\eps}{\eps} \alpha_0 \left(v-\cV^\eps\right)^2 \\
&- 2  \alpha_0 \left(v-\cV^\eps\right)\left( B^\eps(t,\bx,\bu)-\partial_t\cV^\eps\right)\\
=& \frac{\alpha'(t)}{2} \left|w-\cW^\eps(t,\bx)\right|^2 -\alpha_0 \left(v-\cV^\eps\right) \left( B^\eps(t,\bx,\bu) - \partial_t\,\cV^\eps\right) \\
&+ \alpha \left(w-\cW^\eps\right)\left( A(\bu) -A(\cU^\eps)\right) -\alpha_0  +  \left( \frac{\rho_0^\eps}{\eps} \alpha_0 - \alpha_0^2\right) \left(v-\cV^\eps\right)^2\\
=& \frac{\alpha'(t)}{2} \left|w-\cW^\eps(t,\bx)\right|^2 -\alpha_0 \left(v-\cV^\eps\right) \left( B^\eps(t,\bx,\bu) - B^\eps(t,\bx,\cU^\eps) -\cE(f^\eps)\right) \\
&+ \alpha \left(w-\cW^\eps\right)\left( a \left(v-\cV^\eps\right)- b\left(w-\cW^\eps\right)\right) -\alpha_0  +  \left( \frac{\rho_0^\eps}{\eps} \alpha_0 - \alpha_0^2\right) \left(v-\cV^\eps\right)^2\\
\geq & \left( \frac{\alpha'(t)}{2} -\alpha (|a|+b)\right) \left|w-\cW^\eps(t,\bx)\right|^2 -\alpha_0 \left(v-\cV^\eps\right) \left( B^\eps(t,\bx,\bu) - B^\eps(t,\bx,\cU^\eps) -\cE(f^\eps)\right) \\
&-\alpha_0  +  \left( \frac{\rho_0^\eps}{\eps} \alpha_0 - \alpha_0^2 - \frac{|a|}{4}\alpha(t) \right) \left(v-\cV^\eps\right)^2\,. 
\end{align*}
where we have used the Young inequality at the last line. 
Gathering the latter estimate and \eqref{estimate:A} we obtain
\begin{align*}
 &\ds\cB + m'(t)-\left| \partial_v  \psi\right|^2-\left|\cA\right|\\
 \geq\, &m'(t)+\left( \frac{\alpha'}{2} -\alpha (|a|+b)\right) \left|w-\cW^\eps(t,\bx)\right|^2 -\alpha_0 \left(v-\cV^\eps\right) \left( B^\eps(t,\bx,\bu) - B^\eps(t,\bx,\cU^\eps) -\cE(f^\eps)\right) \\
&-\alpha_0  +  \left( \frac{\rho_0^\eps}{\eps} \alpha_0 - \alpha_0^2 - \frac{|a|}{4}\alpha(t) \right) \left|v-\cV^\eps\right|^2-C\left(1+\left| v-\cV^\eps\right|^p\right)-\left|w-\cW^\eps\right|^2\,.\\
 \geq\, &m'(t) + \left( \frac{\alpha'}{2} -\alpha (|a|+b) - 1\right) \left|w-\cW^\eps\right|^2   -\alpha_0- \alpha_0 \left(v-\cV^\eps\right) \left( N(v) -  N(\cV^\eps)\right) \\
&+  \left(\alpha_0 \Psi*_r\rho^\eps_0(\bx) + \frac{\rho_0^\eps}{\eps} \alpha_0 - \frac{3}{2}\,\alpha_0^2 - \frac{|a|}{4}\alpha(t) -\left(|a|+\frac{b}2+ \frac{1}{2}\right) - \frac{1}{2}\,\alpha_0\right) \left(v-\cV^\eps\right)^2\\
 & + \alpha_0\left( v - \cV^\eps \right)\cE(f^\eps)  -C\left(1+\left| v-\cV^\eps\right|^p\right)\,,
 \end{align*}
where we have used Young inequality and the following relation 
\begin{equation*}
  B^\eps(t,\bx,\bu) - B^\eps(t,\bx,\cU^\eps) =   N(v) -  N(\cV^\eps) - (w -\cW^\eps) - \Psi*_r\rho^\eps_0(\bx)  (v - \cV^\eps)\,.
\end{equation*}
Now we observe crucially that according to assumption \eqref{hyp:N}, it holds
\begin{equation*}
    \left(v-\cV^\eps\right) \left( N(v) -  N(\cV^\eps)\right) \leq C - \frac{1}{C}\left\vert v-\cV^\eps\right\vert^{p+1}\,,
\end{equation*}
for some constant $C$ great enough. Furthermore, choosing $\alpha$ such that $\frac{\alpha'}{2} -\alpha (|a|+b) - 1 = 0$, that is
\[
\alpha(t) = 
\alpha_0 
\exp{
\left(
2 (|a| + b) t\right)
}
 + 
\frac{1}{ |a| + b} 
 \left( 
\exp{
\left(
2 (|a| + b) t
\right)
}
 - 
1
 \right) ,
\]
we obtain
\begin{align*}
 &\ds\cB + m'(t)-\left| \partial_v  \psi\right|^2-\left|\cA\right|\\
 \geq\, &m'(t) + \frac{1}{C}\left\vert v-\cV^\eps\right\vert^{p+1}\,
 -\,
C\,\left(1+
\exp{
\left(
2\,(|a|\,+\,b)\,t
\right)
}
\left|
v\,-\,\cV^\eps
\right|^{2}
\,+\,\left|
v\,-\,\cV^\eps
\right|^p\,
\right)\,.
 \end{align*}
Then we find that 
\begin{equation*}
\frac{1}{C}\left\vert v-\cV^\eps\right\vert^{p+1}\, -\,C\,\left(1+\exp{\left(2\,(|a|\,+\,b)\,t\right)}\left|v\,-\,\cV^\eps\right|^{2}\,+\,\left|v\,-\,\cV^\eps\right|^p\,\right)\,\geq\,-\,\Tilde{C}\,\exp{\left(\,6\,(|a|\,+\,b)\,t\,\right)}\,,
\end{equation*}
and therefore conclude the proof.
\end{proof} 
We are now able to proceed to the proof of Theorem \ref{th:main}. Indeed, relying on Lemma \ref{lemme technique s s solution} and applying a comparison principle to equation \eqref{HJ eq ordre 1}, we deduce convergence estimates for the Hopf-Cole transform $\phi^\eps$ of $f^\eps$. 
\begin{proof}[Proof of Theorem \ref{th:main}]
All along this proof, we consider some positive constants $\alpha_0$ (to be determined later on) and we work with the associated quantities $\psi$, $\chi_+$ and $\chi_-$ defined in Proposition \ref{lemme technique s s solution}. We proceed in three steps
\begin{enumerate}
    \item\label{step1} we prove that under our set of assumptions, it holds uniformly in $\eps$
    \[
    \chi_-(0,\bx,\bu)
     \leq 
    \phi_1^\eps(0,\bx,\bu)
     \leq 
    \chi_+(0,\bx,\bu) ,\quad
    \forall 
    (\bx,\bu)\in K \times \R^2 , 
    \]
    where $\chi_+$ and $\chi_-$ are defined in Proposition \ref{lemme technique s s solution}, 
    \item\label{step2}
    we apply Lemma \ref{lemme technique s s solution} and prove a comparison principle to deduce  that the latter inequality holds for all positive time, that is
    \[
    \chi_-(t,\bx,\bu)
     \leq 
    \phi_1^\eps(t,\bx,\bu)
     \leq 
    \chi_+(t,\bx,\bu) ,\quad
    \forall 
    (t,\bx,\bu)\in \R^+ \times K \times \R^2 , 
    \]
    \item\label{step3} we conclude that $\phi^\eps$ converges locally uniformly to $-\frac{1}{2}  \rho_0 \left| v-\cV \right|^2$.
\end{enumerate}

We start with step \eqref{step1}. According to assumption \eqref{hyp:HJ}, for all positive $\eps$, it holds
\begin{equation*}
n(v)
 - 
C
\left(
1+|\bu|^2
\right)
 \leq 
\left(
\phi^\eps_1
 + 
\frac{1}{\eps} 
\left( 
-\frac{1}{2} 
\rho_0^\eps \left| v-\cV^\eps \right|^2
+\frac{1}{2} 
\rho_0 \left| v-\cV \right|^2  \right) \right)(0,\bx,\bu)
 \leq 
n(v)
 + 
C
\left(
1+|\bu|^2
\right) ,
\end{equation*}
for all $(\bx,\bu)\in K\times\R^2$. On the one hand, according to assumptions \eqref{compatibility assumption}, \eqref{hyp:rho0} and \eqref{hyp1:f0}, it holds
\[
\frac{1}{\eps}
\left| 
-\frac{1}{2} 
\rho_0^\eps \left| v-\cV^\eps \right|^2
+\frac{1}{2} 
\rho_0 \left| v-\cV \right|^2 \right|(0,\bx,\bu)
 \leq 
C
\left(
1 + 
|\bu-\cU^\eps_0|^2
\right) ,
\]
for all $(\bx,\bu)\in K\times\R^2$, for some constant $C$ depending only on the initial condition $f^\eps_0$ (only through the constants appearing in assumptions \eqref{hyp:rho0}, \eqref{hyp1:f0} and \eqref{compatibility assumption}). On the other hand, 
according to assumptions \eqref{hyp:rho0} and \eqref{hyp:psi}, $\Psi*_r\rho^\eps_0$ is uniformly bounded with respect to both $\bx \in K$ and $\eps > 0$. On top of that, $\ds\cU^\eps_0$ and $\ds\cE\left(f^\eps_0\right)
$ are also uniformly bounded  with respect to both $\bx \in K$ and $\eps > 0$ according to assumptions \eqref{hyp:N} and \eqref{hyp1:f0}. Therefore,
according to the definition of $\overline{\phi^\eps_1}$ (see below \eqref{HJ eq ordre 1}) and since $N$ is continuous, it holds
\[
\left| 
n
-
\overline{\phi^\eps_1} \right|(0,\bx,\bu)
 \leq 
C
\left(
1 + 
|\bu-\cU^\eps_0|^2
\right) ,
\]
for all $(\bx,\bu)\in K\times\R^2$, for some constant $C$ depending on the initial condition $f^\eps_0$ (only through the constants appearing in assumptions \eqref{hyp:rho0}-\eqref{hyp1:f0}) and $N$. Gathering these considerations and writing
\[
\phi^\eps_1
 - 
\overline{\phi^\eps_1}
 = 
\frac{1}{\eps}
\left(
\phi^\eps
+\frac{1}{2}  \rho_0 \left| v-\cV \right|^2 - \eps n\right)
 + 
\frac{1}{\eps}
\left(
-\frac{1}{2}  \rho_0 \left| v-\cV \right|^2
+\frac{1}{2}  \rho_0^\eps \left| v-\cV^\eps \right|^2
\right)
 + n-\overline{\phi^\eps_1} ,
\]
we deduce that according to assumption \eqref{hyp:HJ}, for all positive $\eps$, it holds
\begin{equation*}
\overline{\phi^\eps_1}(0,\bx,\bu)
 - 
C
\left(
1+|\bu-\cU^\eps_0|^2
\right)
 \leq 
\phi^\eps_1(0,\bx,\bu)
 \leq 
\overline{\phi^\eps_1}(0,\bx,\bu)
 + 
C
\left(
1+|\bu-\cU^\eps_0|^2
\right) ,
\end{equation*}
for all $(\bx,\bu)\in K\times\R^2$. Therefore, taking $\alpha_0/2$ and $m(0)$ greater than $C$, we conclude step \eqref{step1}, indeed for all positive $\eps$ it holds
\[
\chi_-(0,\bx,\bu)
 \leq 
\phi_1^\eps(0,\bx,\bu)
 \leq 
\chi_+(0,\bx,\bu) ,\quad\forall (\bx,\bu)\in K\times\R^2 ,
\] 
where
$\chi_+$ and $\chi_-$ are given in Proposition \ref{lemme technique s s solution}.
~\\

Let us now turn to step \eqref{step2}, which consists in proving that the latter estimates holds true for all positive time. This step relies on two main ingredients: first, we apply Lemma \ref{lemme technique s s solution} which ensures that $\chi_-$ and $\chi_+$ are respectively sub and super-solutions to equation \eqref{HJ eq ordre 1}, then we apply a comparison argument whose proof is detailed in Appendix \ref{Appendix} and which is divided into two steps: first we apply Lemma \ref{eq ss sol HJ kin}, which ensures that the quantities $f_+$ and $f_-$ defined for all 
$
(t,\bx,\bu)\in \R^+ \times K \times \R^2
$
as
\[
f_{\pm}(t,\bx,\bu)
=
\sqrt{\frac{\rho_0(\bx)}{2\pi\eps}}
    \exp{
    \left( 
-\frac{1}{2\,\eps}\,\rho_0^\eps \left| v-\cV^\eps \right|^2
 + 
\chi_{\pm}
    \left(t, \bx, \bu\right)   
    \right)
    } ,
\]
are respectively super and sub-solutions to equation \eqref{kinetic:eq}. 
\begin{remark}
Lemma \ref{eq ss sol HJ kin} indeed applies here since the regularity required on $f_{\pm}$ is ensured by Lemma \ref{WP2}.
\end{remark}
We come back to the proof, and notice that according to the previous step, it holds
\[
f_{-}(0,\bx,\bu)
 \leq 
f^\eps_0(\bx,\bu)
 \leq 
f_{+}(0,\bx,\bu) ,\quad\forall (\bx,\bu)\in K\times\R^2 .
\]      
Therefore, according to Lemma \ref{comp principle kin eq}, we obtain
\[
f_{-}(t,\bx,\bu)
 \leq 
f_0^\eps(t,\bx,\bu)
 \leq 
f_{+}(t,\bx,\bu) ,\quad\forall (t,\bx,\bu)\in \R^+\times K\times\R^2 .
\]      
We deduce that the bound obtained in step \eqref{step1}, propagates through time, that is, for all positive $\eps$, it holds
\[
\chi_-(t,\bx,\bu)
 \leq 
\phi_1^\eps(t,\bx,\bu)
 \leq 
\chi_+(t,\bx,\bu) ,\quad\forall (t,\bx,\bu)\in \R^+\times K\times\R^2 .
\]      
~\\
We can now turn to the last step and prove our main result. According to the definition of $\phi^\eps_1$ and the result of step \eqref{step2}, it holds
\[
-\frac{1}{2}  \rho_0^\eps \left| v-\cV^\eps \right|^2
+\frac{1}{2}  \rho_0 \left| v-\cV \right|^2
 + 
\eps 
\chi_-
 \leq 
\phi^\eps
+\frac{1}{2}  \rho_0 \left| v-\cV \right|^2 
 \leq 
-\frac{1}{2}  \rho_0^\eps \left| v-\cV^\eps \right|^2
+\frac{1}{2}  \rho_0 \left| v-\cV \right|^2
 + 
\eps 
\chi_+ ,
\] 
for all
$\ds
(t,\bx,\bu)\in \R^+\times K\times\R^2 
$. On the one hand, relying on item \eqref{cv macro q} in Theorem \ref{th:preliminary} and since the initial condition $f^\eps_0$ meets the compatibility assumption \eqref{compatibility assumption}, there exists two positive constants $C$ and $\eps_0$ such that for all $\eps \leq \eps_0$, it holds
\[
\left|
-\frac{1}{2}  \rho_0^\eps \left| v-\cV^\eps \right|^2
+\frac{1}{2}  \rho_0 \left| v-\cV \right|^2\right|(t,\bx,\bu)
 \leq 
\eps Ce^{Ct}
\left(
1 + 
|\bu|^2
\right) ,
\]
for all $(t,\bx,\bu)\in \R^+\times K\times\R^2$, where constants $C$ and $\eps_0$ only depends on the data of our problem: $f^\eps_0$ (only through the constants appearing in assumptions \eqref{hyp:rho0}-\eqref{hyp2:f0} and \eqref{compatibility assumption}), $N$, $A$ and $\Psi$. On the other hand, 
according to assumption \eqref{hyp:rho0} and \eqref{hyp:psi}, $\Psi*_r\rho^\eps_0$ is uniformly bounded with respect to both $\bx \in K$ and $\eps > 0$. On top of that, $\ds\cU^\eps$ and $\ds\cE\left(f^\eps\right)
$ are also uniformly bounded  with respect to both $(t,\bx) \in \R^+\times K$ and $\eps > 0$ according item \eqref{estimate moment mu} in Theorem \ref{th:preliminary}. Therefore,
according to the definitions of $\chi_-$ and $\chi_+$ (see Lemma \ref{lemme technique s s solution}) and since $N$ is continuous, it holds
\[
\left| 
\chi_{\pm}
-
n \right|(t,\bx,\bu)
 \leq 
Ce^{Ct}
\left(
1 + 
|\bu|^2
\right) ,
\]
for all $(t,\bx,\bu)\in K\times\R^2$, where the constant $C$ only depends on the data of our problem: $f^\eps_0$ (only through the constants appearing in assumptions \eqref{hyp:rho0}-\eqref{hyp2:f0}), $N$, $A$ and $\Psi$. Hence, we deduce the result: for all $\eps \leq \eps_0$ it holds
\[
\eps
\left( 
n(v)
 - 
Ce^{Ct}
\left(
1 + 
|\bu|^2
\right)
\right)
 \leq 
\left(\phi^\eps
+\frac{1}{2}  \rho_0 \left| v-\cV \right|^2 \right)(t,\bx,\bu)
 \leq 
\eps
\left( 
n(v)
 + 
Ce^{Ct}
\left(
1 + 
|\bu|^2
\right)
\right) ,
\] 
for all
$\ds
(t,\bx,\bu)\in \R^+\times K\times\R^2 
$, where constants $C$ and $\eps_0$ only depends on the data of our problem: $f^\eps_0$ (only through the constants appearing in assumptions \eqref{hyp:rho0}-\eqref{hyp2:f0} and \eqref{compatibility assumption}-\eqref{hyp:f eps 0 well prepared}), $N$, $A$ and $\Psi$.
\end{proof}

\section*{Acknowledgment}
The authors thank warmly Francis Filbet all the discussions without which it would not have been possible to achieve this work, Philippe Laurençot for his enlightening suggestions and comments and Sepideh Mirrahimi for her precious explanations regarding Hamilton-Jacobi equations. A.B. gratefully acknowledge the support of  ANITI (Artificial and Natural Intelligence Toulouse Institute). This project has received support from ANR ChaMaNe No: ANR-19-CE40-0024.

\appendix
\addcontentsline{toc}{section}{Appendices}

\section{Comparison principles}\label{Appendix}
The object of this section is to prove a comparison principle for equation \eqref{HJ eq ordre 1} in order to complete step \eqref{step2} in the proof of Theorem \ref{th:main}. More precisely, we prove that if the quantity $\phi^\eps_1$ defined by \eqref{def phi eps 1} verifies 
\[
    \chi_-(0,\bx,\bu)
     \leq 
    \phi_1^\eps(0,\bx,\bu)
     \leq 
    \chi_+(0,\bx,\bu) ,\quad
    \forall 
    (\bx,\bu)\in K \times \R^2 , 
    \]
where $\chi_-$ an $\chi_+$ are respectively sub and super-solutions to \eqref{HJ eq ordre 1}, then the latter estimate propagates through time, that is
\[
    \chi_-(t,\bx,\bu)
     \leq 
    \phi_1^\eps(t,\bx,\bu)
     \leq 
    \chi_+(t,\bx,\bu) ,\quad
    \forall 
    (t,\bx,\bu)\in \R^+ \times K \times \R^2 .
    \]
Instead of working directly on equation \eqref{HJ eq ordre 1}, our strategy consists in proving a comparison principle for the following linearized version of the kinetic equation \eqref{kinetic:eq}
\begin{equation}
  \label{linearkinetic:eq}
  \ds\partial_t   f
         + 
        \mathrm{div}_{\bu}
        \left[  
        \mathbf{b}^\eps
         f 
         \right]
        - 
       \partial^2_v  
        f
         = 
        \frac{1}{\eps} 
        \rho^\eps_0 
        \partial_v 
        \left[ 
        (v-\cV^\eps)
         f 
        \right] 
        .
\end{equation}
Indeed, it is more convenient to work on equation \eqref{linearkinetic:eq} since we can rely on the decaying properties of solutions to \eqref{kinetic:eq} provided by Theorem \ref{WP mean field eq}. From the comparison principle on equation \eqref{linearkinetic:eq}, we will easily deduce the expected result. This approach is made possible since, according to the following lemma, there is a direct link between sub and super-solutions to equations \eqref{linearkinetic:eq} and \eqref{HJ eq ordre 1}
\begin{lemma}\label{eq ss sol HJ kin}
Consider some fixed $\eps > 0$. Under the assumptions of Theorem \ref{WP mean field eq}, consider the solution $f^\eps$ to equation \eqref{kinetic:eq} and its associated macroscopic quantities 
$
\left(
\cV^\eps , \cW^\eps
\right)
$
provided by Theorem \ref{WP mean field eq}. Furthermore, consider a strictly positive function $f$ such that 
$
f
$,
$
\partial_t f
$ and
$
\nabla^2_{\bu} f
$ lie in $\scC^0
\left(
\R^+\times
K\times
\R^2
\right)
$ and define $\chi$ as follows
\[
f(t,\bx,\bu)
=
\sqrt{\frac{\rho_0(\bx)}{2\pi\eps}}
    \exp{\left( \left(
-\frac{1}{2\eps}  \rho_0^\eps \left| v-\cV^\eps \right|^2
 + 
\chi\right)
    \left(t, \bx, \bu\right)   
    \right)
    } ,\quad
    \forall 
    (t,\bx,\bu)\in \R^+ \times K \times \R^2 , 
\]
Then the following statements are equivalent
\begin{enumerate}
    \item $f$ is a super-solution (resp. sub-solution) to \eqref{linearkinetic:eq}.
    \item $\chi$ is a super-solution (resp. sub-solution) to \eqref{HJ eq ordre 1}.
\end{enumerate}
More precisely it holds
\begin{equation*}
  \ds\partial_t   f
         + 
        \mathrm{div}_{\bu}
        \left[  
        \mathbf{b}^\eps
         f 
         \right]
        - 
       \partial^2_v  
        f
         - 
        \frac{1}{\eps} 
        \rho^\eps_0 
        \partial_v 
        \left[ 
        (v-\cV^\eps)
         f 
        \right]
         = 
        \left(
        \scH^\eps_1 + 
\frac{1}{\eps} 
\scJ^\eps_1\right)\left[
 \phi_1
\right] f ,
\end{equation*}
for all $(t,\bx,\bu)\in \R^+ \times K \times \R^2$.
\end{lemma}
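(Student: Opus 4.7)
The plan is to do a direct computation: write $f = \sqrt{\rho_0/(2\pi\eps)}\,\exp(\Phi/\eps)$ with
\[
\Phi(t,\bx,\bu)\,=\,-\frac{1}{2}\,\rho_0^\eps(\bx)\,|v-\cV^\eps(t,\bx)|^2 \,+\,\eps\,\chi(t,\bx,\bu),
\]
plug this into the left-hand side of \eqref{linearkinetic:eq}, divide by the strictly positive quantity $f$, and check that the result coincides with the left-hand side of \eqref{HJ eq ordre 1} evaluated at $\chi$. The regularity assumed on $f$, $\partial_t f$, $\nabla_{\bu}^2 f$ is exactly what is needed to perform this pointwise computation and, since $\rho_0^\eps$ and $\cV^\eps$ are smooth enough by Lemma \ref{WP2}, all derivatives that appear are continuous.

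The computation is straightforward via the chain rule: $\partial_t f = (f/\eps)\,\partial_t\Phi$, $\nabla_{\bu} f = (f/\eps)\,\nabla_{\bu}\Phi$, and $\partial_v^2 f = (f/\eps)\,\partial_v^2\Phi + (f/\eps^2)(\partial_v\Phi)^2$. After dividing by $f$, the expression splits naturally according to powers of $1/\eps$. The critical observation is the identity
\[
\partial_v\Phi + \rho_0^\eps(v-\cV^\eps)\,=\,\eps\,\partial_v\chi,
\]
which forces the cancellation of the $1/\eps^2$ contribution: the terms $-(\partial_v\Phi)^2/\eps^2$ coming from $-\partial_v^2 f$ and $-\rho_0^\eps(v-\cV^\eps)\partial_v\Phi/\eps^2$ coming from the singular drift combine into $-(1/\eps)\,\partial_v\Phi\,\partial_v\chi$, which then produces exactly a $(\rho_0^\eps/\eps)(v-\cV^\eps)\partial_v\chi - (\partial_v\chi)^2$ term. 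The remaining $1/\eps$ contributions produced by $\partial_t\Phi$, $\mathbf{b}^\eps\cdot\nabla_{\bu}\Phi$, $-\partial_v^2\Phi$, and the $-\rho_0^\eps/\eps$ term from the singular drift combine, using that $\rho_0^\eps$ is $t$-independent, into
\[
\frac{\rho_0^\eps(v-\cV^\eps)}{\eps}\left(\partial_v\chi - B^\eps(t,\bx,\bu) + \partial_t\cV^\eps\right) \,+\, \partial_t\chi \,+\, \mathbf{b}^\eps\cdot\nabla_{\bu}\chi \,-\, \partial_v^2\chi,
\]
to which one must add the order-one contribution $\mathrm{div}_{\bu}[\mathbf{b}^\eps]$ coming from $\mathrm{div}_{\bu}[\mathbf{b}^\eps f]$.

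The final step is simply to recognize the coefficient of $\rho_0^\eps(v-\cV^\eps)/\eps$ as $\partial_v(\chi - \overline{\phi^\eps_1})$, using the definition of $\overline{\phi^\eps_1}$ given below \eqref{HJ eq ordre 1} together with the rewriting \eqref{eq2:Veps} of the macroscopic equation, which yields precisely $\partial_v\overline{\phi^\eps_1} = B^\eps(t,\bx,v,w) - \partial_t\cV^\eps$. Combining everything, the calculation produces
\[
\frac{1}{f}\!\left(\partial_t f+\mathrm{div}_{\bu}[\mathbf{b}^\eps f]-\partial_v^2 f-\frac{1}{\eps}\rho_0^\eps\partial_v[(v-\cV^\eps)f]\right)\,=\,\partial_t\chi+\nabla_{\bu}\chi\cdot\mathbf{b}^\eps+\mathrm{div}_{\bu}[\mathbf{b}^\eps]-\partial_v^2\chi-(\partial_v\chi)^2+\frac{\rho_0^\eps(v-\cV^\eps)}{\eps}\partial_v(\chi-\overline{\phi^\eps_1}),
\]
which is exactly the left-hand side of \eqref{HJ eq ordre 1} with $\phi^\eps_1$ replaced by $\chi$, and which one labels $\scH^\eps_1[\chi] + (1/\eps)\scJ^\eps_1[\chi]$ by separating the regular part from the singular part proportional to $1/\eps$. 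Since $f>0$, the sign of the kinetic expression equals the sign of the Hamilton--Jacobi expression, so being a super- (resp. sub-)solution is equivalent for $f$ and $\chi$. The only real obstacle is bookkeeping the cancellations in the $1/\eps^2$ and $1/\eps$ layers correctly; everything else is routine.
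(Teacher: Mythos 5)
Your proof is correct and follows essentially the same route as the paper: a direct chain-rule computation starting from $f = \sqrt{\rho_0/(2\pi\eps)}\,\exp(\Phi/\eps)$, the only difference being that the paper factors the algebra through the order-zero equation \eqref{HJ eq ordre 0} (and then quotes the earlier derivation of \eqref{HJ eq ordre 1}) while you sort directly by powers of $1/\eps$. A minor bookkeeping slip in the exposition --- the $\frac{\rho_0^\eps(v-\cV^\eps)}{\eps}\partial_v\chi$ piece in your intermediate display actually arises from the $1/\eps^2$ cancellation described in the previous sentence, not from the listed pure-$1/\eps$ sources $\partial_t\Phi$, $\mathbf{b}^\eps\cdot\nabla_{\bu}\Phi$, $-\partial_v^2\Phi$, $-\rho_0^\eps/\eps$ --- does not affect the final identity, which is exactly right.
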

\begin{proof}
We consider $f$ and $\chi$ as in Lemma \ref{eq ss sol HJ kin} and proceed in two steps. On the one hand, plugging $f$ in equation \eqref{linearkinetic:eq}, one has the following relation
\begin{equation*}
  \ds\partial_t   f
         + 
        \mathrm{div}_{\bu}
        \left[  
        \mathbf{b}^\eps
         f 
         \right]
        - 
       \partial^2_v  
        f
         - 
        \frac{1}{\eps} 
        \rho^\eps_0 
        \partial_v 
        \left[ 
        (v-\cV^\eps)
         f 
        \right]
         = \frac{1}{\eps} f\,\cA
        \,,
\end{equation*}
for all $(t,\bx,\bu)\in \R^+ \times K \times \R^2$ where $\cA$ gathers the terms obtained plugging $\ds \phi\,:=\,-\frac{1}{2}  \rho_0^\eps \left| v-\cV^\eps \right|^2
 + \eps 
\chi$ into equation \eqref{HJ eq ordre 0}, that is
\begin{equation*}
\cA\,=\,
\partial_t \phi + \nabla_{\bu}\phi \cdot
\mathbf{b}^\eps + \eps  \mathrm{div}_{\bu} \left[  \mathbf{b} \right] - \partial^2_v \phi - \rho_0^\eps -  \frac{1}{\eps} \left(  \partial_v \left(
\frac{1}{2} 
\rho_0^\eps \left| v-\cV^\eps \right|^2 + \phi  \right) \partial_v \phi \right)\,.
\end{equation*}
On the other hand, according to computations already detailed at the beginning of Section \ref{A priori estimates}, $\cA$ also corresponds to the terms obtained plugging $\chi$ into equation \eqref{HJ eq ordre 1}, that is 
\begin{equation*}
        \cA
\,=\, 
\eps\,
\left(
\partial_t \chi + \nabla_{\bu} \chi \cdot \mathbf{b}^\eps + \mathrm{div}_{\bu} \left[  \mathbf{b}^\eps  \right] -
       \partial^2_v 
        \chi
       -
        \left| 
        \partial_v \chi
         \right|^2
 + 
\frac{1}{\eps}\,\rho_0^\eps \left( v - \cV^\eps
\right) \partial_v  \left( \chi 
 -  \overline{\phi^\eps_1}\right)
\right)\,,
\end{equation*}
for all $(t,\bx,\bu)\in \R^+ \times K \times \R^2$. Gathering these two relations, we obtain the result.
\end{proof}

It is now left to prove that a comparison principle holds for equation \eqref{linearkinetic:eq}. It is the object of the following Lemma 
\begin{lemma}\label{comp principle kin eq}
Consider some fixed $\eps > 0$. Under the assumptions of Theorem \ref{WP mean field eq}, consider the solution $f^\eps$ to equation \eqref{kinetic:eq} and its associated macroscopic quantities 
$
\left(
\cV^\eps , \cW^\eps
\right)
$
provided by Theorem \ref{WP mean field eq}. Furthermore, consider a strictly positive function $f$ such that 
$
f
$,
$
\partial_t f
$ and
$
\nabla^2_{\bu} f
$ lie in $\scC^0
\left(
\R^+\times
K\times
\R^2
\right)
$. Suppose that at initial time, it holds
\[
    f^\eps_0(\bx,\bu)
     \leq 
    f(0,\bx,\bu)
     ,\quad
    \forall 
    (\bx,\bu)\in K \times \R^2 , 
    \]
and that $f$ is super-solution to equation \eqref{linearkinetic:eq}, that is
\begin{equation*}
0 \leq 
  \ds\partial_t   f
         + 
        \mathrm{div}_{\bu}
        \left[  
        \mathbf{b}^\eps
         f 
         \right]
        - 
       \partial^2_v  
        f
         - 
        \frac{1}{\eps} 
        \rho^\eps_0 
        \partial_v 
        \left[ 
        (v-\cV^\eps)
         f 
        \right]
\end{equation*}
for all $(t,\bx,\bu)\in \R^+ \times K \times \R^2$. Then it holds
\[
    f^\eps(t,\bx,\bu)
     \leq 
    f(t,\bx,\bu)
     ,\quad
    \forall 
    (t,\bx)\in \R^+\times K ,\text{a.e. in }\bu\in\R^2 .
    \]
Furthermore, the latter statement is also true if we replace "super-solution" by "sub-solution" and the symbol "$\geq$" by "$\leq$".
\end{lemma}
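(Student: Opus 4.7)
The linearity of equation \eqref{linearkinetic:eq} in $f$ reduces the claim to the following: setting $g := f - f^\eps$, we have $g(0, \bx, \cdot) \geq 0$ and
\begin{equation*}
L[g] \,:=\, \partial_t g + \mathrm{div}_\bu[\mathbf{b}^\eps g] - \partial_v^2 g - \frac{\rho_0^\eps}{\eps}\partial_v\bigl[(v-\cV^\eps)g\bigr] \,\geq\, 0,
\end{equation*}
and we must show $g \geq 0$ a.e.\ on $\R^+ \times K \times \R^2$. Since $L$ contains no $\bx$-derivative, we fix $\bx \in K$ as a parameter and work in $(t,\bu) \in \R^+ \times \R^2$. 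The plan is a Stampacchia-type energy estimate for the negative part $g^- := \max(-g,0)$.

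For $\delta > 0$, I introduce a smooth nonnegative convex approximation $\phi_\delta$ of $s \mapsto (s^-)^2/2$ with $\phi_\delta'(s) \leq 0$ and $\phi_\delta''(s) \geq 0$. Multiplying $L[g] \geq 0$ by the nonpositive factor $\phi_\delta'(g)$, reversing the inequality, and exploiting the chain-rule identities $\phi_\delta'(g)\nabla_\bu g = \nabla_\bu \phi_\delta(g)$ and $\phi_\delta'(g)\partial_v^2 g \leq \partial_v^2 \phi_\delta(g)$, a rewriting of \eqref{linearkinetic:eq} in non-conservative form yields
\begin{equation*}
\partial_t \phi_\delta(g) + \widetilde{\mathbf{b}}^\eps \cdot \nabla_\bu \phi_\delta(g) - \partial_v^2 \phi_\delta(g) + c^\eps\, \phi_\delta'(g)\, g \,\leq\, 0,
\end{equation*}
with $\widetilde{\mathbf{b}}^\eps := \mathbf{b}^\eps - (\rho_0^\eps/\eps)(v-\cV^\eps, 0)$ and zeroth-order coefficient $c^\eps := \mathrm{div}_\bu \mathbf{b}^\eps - \rho_0^\eps/\eps$. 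Integrating against a weight $\omega(t,\bx,\bu)$ over $\bu \in \R^2$ and integrating by parts, the identity $\mathrm{div}_\bu \widetilde{\mathbf{b}}^\eps = c^\eps$ produces cancellations and leaves a differential inequality of the form
\begin{equation*}
\frac{d}{dt}\int_{\R^2} \phi_\delta(g)\,\omega\,d\bu \,\leq\, \int_{\R^2} \phi_\delta(g)\,\bigl[-c^\eps\omega + \widetilde{\mathbf{b}}^\eps \cdot \nabla_\bu \omega + \partial_v^2 \omega - \partial_t \omega\bigr]\,d\bu.
\end{equation*}

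The heart of the argument is the choice of $\omega$: I would take it essentially as the inverse of the Fokker--Planck Gaussian in $v$ centred on $\cV^\eps$, namely $\omega \sim \exp\bigl(\rho_0^\eps (v-\cV^\eps)^2/(2\eps)\bigr)$ tempered by a moderate Gaussian in $w-\cW^\eps$. With this choice, $\widetilde{B}^\eps \partial_v \omega + \partial_v^2 \omega$ cancels the singular $-\rho_0^\eps/\eps$ in $-c^\eps$ up to the term $(\rho_0^\eps/\eps)(v-\cV^\eps) N(v)$, which is \emph{negative} at infinity thanks to assumption \eqref{hyp:N} and thus absorbs the $N'(v)$ contribution for $|v|$ large. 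The remaining pieces of the bracket, divided by $\omega$, are then bounded above by a function $C(t)$ locally integrable in time, using the uniform bounds and regularity of $\cU^\eps$, $\partial_t \cU^\eps$, $\Psi \ast_r \rho_0^\eps$ and $\cE(f^\eps)$ provided by Lemma \ref{WP2}, Theorem \ref{th:preliminary} item \eqref{estimate moment mu} and Proposition \ref{estimate:derivee:erreur}. Grönwall combined with $\phi_\delta(g(0,\bx,\cdot)) \equiv 0$ then forces $\phi_\delta(g) \equiv 0$, and $\delta \to 0$ yields $g^- = 0$ a.e.\ in $\bu$. The sub-solution case is obtained by exchanging the roles of $f$ and $f^\eps$ and flipping signs throughout.

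The main obstacle is precisely this weight construction together with the justification of the integration by parts. Without a weight, $c^\eps$ is unbounded above, due to both the $N'(v)$ contribution and the singular $-\rho_0^\eps/\eps$, so no naive Grönwall closure exists; the inverse-Gaussian weight above is the natural one that cancels the singular prefactor, but one must check that $\int \phi_\delta(g)\,\omega\,d\bu$ remains finite along the flow and that the boundary terms at infinity vanish when integrating by parts. This is where the standing regularity of $f$, the exponential-$L^2$ moment bound on $f^\eps$ from Theorem \ref{WP mean field eq}, and the super-linear confinement $vN(v) \to -\infty$ encoded in \eqref{hyp:N} must be simultaneously brought to bear, possibly after a preliminary truncation in $\bu$ and a limiting argument to legitimise the formal manipulations.
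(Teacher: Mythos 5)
Your approach is genuinely different from the paper's, and as sketched it has real gaps. The paper exploits the fact that the operator
\[
L[g] \,=\, \partial_t g + \mathrm{div}_\bu\!\left[\widetilde{\mathbf b}^\eps g\right] - \partial_v^2 g,
\qquad
\widetilde{\mathbf b}^\eps := \mathbf b^\eps - \tfrac{\rho_0^\eps}{\eps}(v-\cV^\eps,0)^\top ,
\]
is already in conservative (divergence) form in $\bu$. After first reducing to smooth compactly supported initial data so that Proposition~\ref{classical solution} applies, it multiplies $L[f^\eps - f]\le 0$ by $\mathds 1_{f^\eps\ge f}$ and uses Kato's inequality for the $\partial_v^2$ term to obtain $L[(f^\eps - f)_+]\le 0$ in the weak sense, and then simply integrates over $\bu$ against the constant weight $1$. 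Every $\bu$-derivative term vanishes by the divergence theorem, so
\[
\int_{\R^2}(f^\eps - f)_+(t,\bx,\bu)\,\dD\bu \,\le\, \int_{\R^2}(f^\eps - f)_+(0,\bx,\bu)\,\dD\bu = 0 ,
\]
the boundary terms at infinity being controlled because $0\le(f^\eps-f)_+\le f^\eps$ (here the strict positivity of $f$ is used) and $f^\eps$ has finite $\bu$-moments of all orders by Theorem~\ref{WP mean field eq}. No weight and no Gr\"onwall step are needed; the general case follows from the $\scC^0(K,L^1)$ continuity in the initial data and a density argument. Your Stampacchia-type plan instead rewrites $L$ in non-conservative form, which creates the singular zeroth-order coefficient $c^\eps = \mathrm{div}_\bu\mathbf b^\eps - \rho_0^\eps/\eps$, and you are then forced to introduce an inverse-Gaussian weight $\omega$ to absorb it. This is where the gaps are.

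First, the coercivity of the bracket does not hold with the weight you propose. Writing out $-c^\eps\omega + \widetilde{\mathbf b}^\eps\cdot\nabla_\bu\omega + \partial_v^2\omega$ for $\omega\sim\exp\!\left(\rho_0^\eps|v-\cV^\eps|^2/(2\eps)\right)$, the quadratic terms in $(v-\cV^\eps)^2$ cancel, but the product $\tfrac{\rho_0^\eps}{\eps}(v-\cV^\eps)B^\eps$ leaves behind not only the good confining piece $\tfrac{\rho_0^\eps}{\eps}(v-\cV^\eps)N(v)$ but also the cross term $-\tfrac{\rho_0^\eps}{\eps}(v-\cV^\eps)w$ and a piece proportional to $v(v-\cV^\eps)$ from $\Psi*_r\rho_0^\eps$, and these are not bounded above by a function of $t$ alone. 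Tempering $\omega$ by a mild Gaussian in $w-\cW^\eps$ does not remove the $|v||w|$ growth unless the two variances are carefully tuned, and you do not carry out that tuning. Second, even if the coercivity could be arranged, the weighted $L^2$ quantity $\int\phi_\delta(g)\,\omega\,\dD\bu\lesssim\int (f^\eps)^2\omega\,\dD\bu$ is not known to be finite: the available a priori bound is only $\int e^{|\bu|^2/2}f^\eps\,\dD\bu < \infty$, which is an $L^1$-type bound and gives neither a pointwise Gaussian bound on $f^\eps$ nor $L^2$ integrability against $e^{\rho_0^\eps(v-\cV^\eps)^2/(2\eps)}$. You flag this yourself but leave it open, and it is precisely the point that would need to be proved. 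The paper's $L^1$ route sidesteps both problems at once, and it is the one you should adopt.
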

\begin{proof}
Since the proof of the comparison principle for super and sub-solutions is the same, we only detail it for super-solutions.
Let us first suppose that $f^\eps_0$ lies in $\scC^0
\left(
K,\scC^\infty_c
\left(
\R^2
\right)
\right)
$ so that, according to Proposition \ref{classical solution}, $f^\eps$ is a classical solution to \eqref{kinetic:eq}. Then it holds
\begin{equation*}
  \ds\partial_t   (f^\eps-f)
         + 
        \mathrm{div}_{\bu}
        \left[  
        \mathbf{b}^\eps
         (f^\eps-f) 
         \right]
        - 
       \partial^2_v  
        (f^\eps-f)
         - 
        \frac{1}{\eps} 
        \rho^\eps_0 
        \partial_v 
        \left[ 
        (v-\cV^\eps)
         (f^\eps-f) 
        \right]
         \leq 0
\end{equation*}
$a.e.$ with respect to $(t,\bx,\bu)\in \R^+ \times K \times \R^2$. Therefore, multiplying the latter relation by $\ds \mathds{1}_{f^\eps\geq f}$, we deduce that in the weak sense, it holds
\begin{equation*}
  \ds\partial_t   (f^\eps-f)_+ + \mathrm{div}_{\bu} \left[   \mathbf{b}^\eps (f^\eps-f)_+  \right]
        - 
       \partial^2_v  
        (f^\eps-f)_+
         - 
        \frac{1}{\eps} 
        \rho^\eps_0 
        \partial_v 
        \left[ 
        (v-\cV^\eps)
         (f^\eps-f)_+ 
        \right]
         \leq 0
\end{equation*}
where $(\cdot)_+$ stands for the positive part and is defined by $(\cdot)_+ = (|\cdot|+id_{\R})/2$. In order to derive the latter relation, we follow a classical procedure which we do not detail here and which consists in regularizing the positive part $(\cdot)_+$. Then we integrate the latter relation with respect to time and $\bu$ and obtain 
\begin{equation*}
\int_{\R^2}(f^\eps-f)_+(t,\bx,\bu) \dD \bu \leq 
\int_{\R^2}(f^\eps-f)_+(0,\bx,\bu) \dD \bu .
\end{equation*}
The latter computations are justified since
$
0 \leq (f^\eps-f)_+ \leq f^\eps
$ and since, according to Theorem \ref{WP mean field eq}, $f^\eps$ has moments up to any order with respect to $\bu$. Since $f^\eps \leq f$ at time $t=0$, we deduce the result.\\

To conclude the proof, we lift the condition $f^\eps_0 \in
\scC^0\left(K,
\scC^\infty_c\left(\R^2\right)
\right)$ using the $\scC^0
\left(
K,
L^1\left(\R^2\right)
\right)
$-continuity of equation \eqref{kinetic:eq} with respect to the initial data (see \cite[Section A.2]{BF} for a detailed proof of this continuity result) and following a density argument.
\end{proof}

\section{Regularity estimates}\label{regularity estimates}
In this section, we derive regularity estimates for the solution $f^\eps$ to equation \eqref{kinetic:eq} and therefore prove Proposition \ref{classical solution}. The main difficulty here consists in dealing with the contribution due to the non-linear drift $N$. We manage to bypass this difficulty by estimating the norm of $f^\eps$ and its derivatives in the following weighted $L^1$ spaces
\[
L^{1}(\omega_q)= \left\{ f:\R^2\mapsto\R , \int_{\R^2} |f| \omega_{q}(\bu) \dD\bu < +\infty\right\},
\]
where $\ds\omega_q(\bu) = 1 + |\bu|^q$, for $q \geq 2$.
The first step consists in estimating the norm of $f^\eps$ in $L^1(\omega_q)$. This step relies on previous results obtained in \cite{BF}. Then we adapt these computations to evaluate the norm of the derivatives of $f^\eps$ in $L^1(\omega_q)$. Indeed, the derivatives of $f^\eps$ solve equation \eqref{kinetic:eq} with additional source terms whose contribution can be controlled thanks to the confining properties of the non-linear drift $N$. Let us outline the strategy in the case of the first order derivatives: equation \eqref{kinetic:eq} on $f^\eps$ reads as follows
\[
\partial_t f^\eps = \scA^\eps f^\eps ,
\]
where operator $\scA^\eps$ is given by
\[
\scA^\eps f = \partial^2_v  f+ \frac{1}{\eps} \rho^\eps_0 \partial_v \left[ (v-\cV^\eps)f \right]- \mathrm{div}_{\bu}\left[   \mathbf{b}^\eps f \right] .
\]
Relying the arguments developed in the proof of \cite[Proposition 3.1]{BF} it holds
\begin{lemma}\label{estime op A}
Consider some fixed $\eps$ and some $q \geq 2$. Under the assumptions of Theorem \ref{WP mean field eq}, consider the operator $\scA^\eps$ associated to the solution $f^\eps$ to equation \eqref{kinetic:eq}. There exists a positive constant $C$ such that for all $\bx \in K$, it holds
\[
\int_{\R^2} \sign{(f)} 
\scA^\eps\left(f\right) \omega_{q}(\bu) \dD\bu
 \leq 
C \|f\|_{L^1
\left(
\omega_q
\right)
} + 
q\int_{\R^2}
\mathds{1}_{|v| \geq 1}
\frac{N(v)}{v} |v|^q f(\bu) \dD \bu
 ,
\]
for all function $f$ lying in $W^{2,1}(\omega_{p+q})$.
\end{lemma}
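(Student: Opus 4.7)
The plan is to test $\scA^\eps f$ against $\sign(f)\,\omega_q$ and integrate by parts each of the three pieces of $\scA^\eps$. The multiplier $\sign(f)$ is handled rigorously by replacing it with a smooth nondecreasing approximation $\eta_\delta \to \sign$ (with $\eta_\delta' \geq 0$) and passing to the limit $\delta \to 0$. The Kato-type identities that emerge---equality for the first-order pieces, inequality for the diffusive one---read
\begin{align*}
\int_{\R^2} \sign(f)\,\partial_v^2 f \,\omega_q \, \dD\bu &\leq \int_{\R^2} |f|\,\partial_v^2 \omega_q \, \dD\bu,\\
\int_{\R^2} \sign(f)\,\partial_v\bigl[(v-\cV^\eps)f\bigr]\,\omega_q\, \dD\bu &= -\int_{\R^2}(v-\cV^\eps)\,(\partial_v \omega_q)\,|f|\, \dD\bu,\\
\int_{\R^2}\sign(f)\,\mathrm{div}_{\bu}\bigl[\mathbf{b}^\eps f\bigr]\,\omega_q\,\dD\bu &= -\int_{\R^2}\mathbf{b}^\eps\cdot\nabla_{\bu}\omega_q\,|f|\,\dD\bu,
\end{align*}
and the integration by parts produces no boundary contribution because $f\in W^{2,1}(\omega_{p+q})$ while the weights $\omega_q$, $(v-\cV^\eps)\,\partial_v\omega_q$ and $\mathbf{b}^\eps\,\omega_q$ all grow at most polynomially at infinity.

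Inserting these relations bounds $\int \sign(f)\,\scA^\eps(f)\,\omega_q$ by the sum of three pieces. Since $\partial_v^2\omega_q$ is controlled by $C\,\omega_q$, the diffusive piece is $\leq C\,\|f\|_{L^1(\omega_q)}$. For the Ornstein--Uhlenbeck piece, expanding $(v-\cV^\eps)\,\partial_v\omega_q = q v^2|\bu|^{q-2} - q\,\cV^\eps v\,|\bu|^{q-2}$ isolates a \emph{non-positive} contribution $-\frac{q}{\eps}\rho_0^\eps\int v^2|\bu|^{q-2}|f|\,\dD\bu$ that we simply discard, plus a remainder bounded by $\frac{C}{\eps}\|f\|_{L^1(\omega_q)}$ thanks to the uniform bounds on $\cV^\eps$ and $\rho_0^\eps$ provided by item~(\ref{estimate moment mu}) of Theorem~\ref{th:preliminary} and by~\eqref{hyp:rho0}. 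The drift piece $\int \mathbf{b}^\eps\cdot\nabla_{\bu}\omega_q\,|f|$ uses $\mathbf{b}^\eps=(B^\eps,A(\bu))$ with $B^\eps = N(v) - w - \Psi*_r\rho_0^\eps\,v + \Psi*_r(\rho_0^\eps\cV^\eps)$ and $A(\bu) = av - bw + c$: all contributions \emph{other than} $N(v)\,\partial_v \omega_q$ depend polynomially on $(v,w)$ with total degree at most $q$, and are therefore dominated pointwise by $C\,\omega_q\,|f|$, after invoking the uniform bounds on $(\cV^\eps,\cW^\eps)$ from Theorem~\ref{th:preliminary} and on $\Psi*_r\rho_0^\eps$ from \eqref{hyp:psi}--\eqref{hyp:rho0}.

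There remains the genuinely nonlinear contribution $q\int N(v)\,v\,|\bu|^{q-2}|f|\,\dD\bu$, which we split along $\{|v|\leq 1\}$ and $\{|v|\geq 1\}$. On the former, $N$ is smooth and bounded, so the integrand is dominated by $C\,\omega_q|f|$. On the latter, writing $N(v)\,v = \tfrac{N(v)}{v}\,v^2$, using the elementary inequality $v^2|\bu|^{q-2}\geq |v|^q$ and the fact that $N(v)/v\leq 0$ for $|v|$ large enough (by \eqref{hyp:N}) yields the upper bound $q\int\mathds{1}_{|v|\geq 1}\tfrac{N(v)}{v}|v|^q|f|$, up to an error supported on a bounded range of $v$ where $N(v)/v$ may still be positive but is uniformly controlled, contributing a further $C\,\|f\|_{L^1(\omega_q)}$. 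The main obstacle is precisely this last step: because $N$ has only polynomial growth through \eqref{hyp:N}--\eqref{hyp N'}, the rough bound $|N(v)\,v|\lesssim 1+|v|^{p+1}$ is what forces the hypothesis $f\in W^{2,1}(\omega_{p+q})$ rather than $W^{2,1}(\omega_q)$. Retaining the confining contribution as an explicit term in the conclusion---rather than absorbing it into the generic constant---is exactly what will let the Grönwall argument behind Proposition~\ref{classical solution} close, since the sign $N(v)/v\leq 0$ for large $|v|$ provides the dissipation that absorbs the source terms appearing once $\scA^\eps$ is differentiated.
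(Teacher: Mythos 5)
Your argument is correct, and since the paper provides no proof of this lemma (it defers to the cited reference \cite{BF}), it also serves as a self-contained verification. The structure you use — regularizing the multiplier $\sign(f)$, integrating by parts, invoking the Kato inequality for the diffusive term and exact identities for the first-order pieces, discarding the non-positive Ornstein--Uhlenbeck contribution $-\frac{q\rho_0^\eps}{\eps}\int v^2 |\bu|^{q-2}|f|$, absorbing the degree-$\leq q$ polynomial drift terms via the uniform bounds on $\cV^\eps$, $\cW^\eps$, $\rho_0^\eps$, $\Psi*_r \rho_0^\eps$, and finally passing from $N(v)\,v\,|\bu|^{q-2}$ to $\frac{N(v)}{v}|v|^q$ using the confining sign of $N$ — is exactly the pattern the ambient text relies on. Two remarks. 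First, your derivation produces $|f|$ in the confining term, whereas the lemma as printed has $f$; yours is the correct form, and the printed $f$ looks like a typo: in the subsequent application to the sign-changing derivatives $g=\partial_v f^\eps$ and $h=\partial_w f^\eps$ in the proof of Proposition \ref{classical solution}, one needs $|g|$ and $|h|$ for the term $\bigl(q\,\mathds{1}_{|v|\geq 1}\frac{N(v)}{v}-N'(v)\bigr)|v|^q$ to be discarded by its sign. Second, the constant $C$ you obtain depends on $\eps$ through the $\frac{C}{\eps}$ remainder from the Ornstein--Uhlenbeck piece; this is consistent with the lemma, which fixes $\eps$ at the outset, and the resulting regularity conclusion in Proposition \ref{classical solution} is likewise stated for a fixed $\eps$, so no uniformity in $\eps$ is required here.
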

As a direct consequence of Lemma \ref{estime op A}, we deduce that for any smooth initial data $f^\eps_0$, the solution $f^\eps$ to equation \eqref{kinetic:eq} provided by Theorem \ref{WP mean field eq} lies in 
$
L^\infty_{loc}
\left(
\R^+\times K , L^1
\left(
\omega_q
\right)
\right)
$ for all exponent $q \geq 2$. Indeed, since $N$ meets the confining assumption \eqref{hyp:N} we have:
$
\ds
\mathds{1}_{|v| \geq 1} N(v)/v
 \leq C
$ for some constant $C$. Therefore, multiplying equation \eqref{kinetic:eq} by $\sign{
\left(
f^\eps
\right)
} \omega_q(\bu)$, integrating with respect to $\bu$ and applying Lemma \ref{estime op A}, we obtain that for all exponent $q$, it holds
\[
\frac{\dD}{\dD t} 
\|f^\eps\|_{L^1
\left(
\omega_q
\right)
}
 \leq C \|f^\eps\|_{L^1
\left(
\omega_q
\right)
} .
\]
Hence, applying Gronwall's lemma and taking the supremum over all $\bx \in K$, we deduce
\begin{equation}\label{estime Omega f}
\|f^\eps(t)\|_{L^\infty
\left(K , 
L^1
\left(
\omega_q
\right)\right)
}
 \leq e^{Ct} \|f^\eps_0\|_{L^\infty
\left(K , 
L^1
\left(
\omega_q
\right)\right)
} ,
\end{equation}
for all $t\in \R^+$. We follow the same strategy for derivatives of $f^\eps$:
writing $\ds (g,h) = 
\left(\partial_v f^\eps,\partial_w f^\eps\right)$ and differentiating equation \eqref{kinetic:eq} with respect to $v$ and $w$ we obtain 
\begin{equation}\label{eq:grad f eps}
\left\{
    \begin{array}{ll}
        \ds\partial_t   g
         = 
        \scA^\eps g
         - 
        \left(
        N'-\Psi*_r\rho_0^\eps-\frac{1}{\eps} \rho_0^\eps
        \right) g
         - N'' f^\eps
         - 
        a h
         
        ,
        \\[1.1em]
        \ds\partial_t   h
         = 
        \scA^\eps h
         + g
         + b h
         .
    \end{array}
\right.
\end{equation}
Therefore, $g$ and $h$ solve the same equation as $f^\eps$ with additional an high order term due to the non-linear drift $N$. We control this additional term thanks to the confining properties of $N$.
\begin{proof}[Proof of Proposition \ref{classical solution}]
Let us consider some initial data $f^\eps_0$ lying in $\scC^0
\left(
K , 
\scC^\infty_c
\left(
\R^2
\right)
\right)
$ and the associated solution $f^\eps$ to \eqref{kinetic:eq} provided by Theorem \ref{WP mean field eq}. We start by proving that 
$
f^\eps
$ lies in 
$\ds
L^{\infty}_{loc}
\left(
\R^+\times K , 
W^{1,1}
\left(
\R^2
\right)
\right)
$. We fix some $\bx \in K$, some exponent $q \geq 2$ and integrate with respect to $\bu$ the sum between the first equation in \eqref{eq:grad f eps} multiplied by $\sign{(g)} \omega_q(\bu)$ and the second multiplied by $\sign{(h)} \omega_q(\bu)$. According to Lemma \ref{estime op A}, assumptions \eqref{hyp:psi} on $\Psi$, \eqref{hyp:rho0} on $\rho_0^\eps$ and \eqref{hyp N'} on $N$, we obtain
\begin{align*}
\frac{\dD}{\dD t} 
\left(
\|g\|_{L^1
\left(
\omega_q
\right)
}+
\|h\|_{L^1
\left(
\omega_q
\right)
}\right)
 \leq &C
\left(
\|g\|_{L^1
\left(
\omega_q
\right)
}
+
\|h\|_{L^1
\left(
\omega_q
\right)
}
+
\|f^\eps\|_{L^1
\left(
\omega_{p'}
\right)
}\right)\\[0.8em]
&+
\int_{\R^2}
\left(
q \mathds{1}_{|v| \geq 1}
\frac{N(v)}{v}
-N'(v)
\right)|v|^q g(\bu) \dD \bu
 ,
\end{align*}
where $p'$ is given in assumption \eqref{hyp N'}. Since $N$ meets assumptions \eqref{hyp:N} and \eqref{hyp N reg}, we deduce that for $q$ great enough, it holds
\begin{align*}
\frac{\dD}{\dD t} 
\left(
\|g\|_{L^1
\left(
\omega_q
\right)
}+
\|h\|_{L^1
\left(
\omega_q
\right)
}\right)
 \leq C
\left(
\|g\|_{L^1
\left(
\omega_q
\right)
}
+
\|h\|_{L^1
\left(
\omega_q
\right)
}
+
\|f^\eps\|_{L^1
\left(
\omega_{p'}
\right)
}\right) .
\end{align*}
Therefore, applying Gronwall's lemma, taking the supremum over all $\bx \in K$ and replacing $\ds\|f^\eps\|_{L^1
\left(
\omega_{p'}
\right)
}$ according to estimate \eqref{estime Omega f}, we deduce
\begin{align*}
\|g(t)\|_{L^\infty
\left(K , 
L^1
\left(
\omega_q
\right)\right)
}+
\|h(t)\|&_{L^\infty
\left(K , 
L^1
\left(
\omega_q
\right)\right)
}
 \leq\\[0,8em]
&e^{Ct} 
\left(
\|g_0\|_{L^\infty
\left(K , 
L^1
\left(
\omega_q
\right)\right)
}
+
\|h_0\|_{L^\infty
\left(K , 
L^1
\left(
\omega_q
\right)\right)
}
+
\|f^\eps_0\|_{L^\infty
\left(K , 
L^1
\left(
\omega_q
\right)\right)
}\right) ,
\end{align*}
for all time $t\in\R^+$. As a straightforward consequence, we obtain the expected result:
$\ds
f^\eps\in
L^{\infty}_{loc}
\left(
\R^+\times K , 
W^{1,1}
\left(
\R^2
\right)
\right)
$.\\~\\

We obtain that $\ds
f^\eps\in
L^{\infty}_{loc}
\left(
\R^+\times K , 
W^{2,1}
\left(
\R^2
\right)
\right)
$ iterating the same procedure as before but this time on the derivatives of $g$ and $h$ and using assumption \eqref{hyp N reg}, which ensures that $N'''$ has polynomial growth.\\

To end with, we obtain $\ds
\partial_t f^\eps\in
L^{\infty}_{loc}
\left(
\R^+\times K , 
L^{1}
\left(
\R^2
\right)
\right)
$ noticing that according to the definition of $\cA^\eps$, it holds
\[
\|\scA^\eps f^\eps\|_{L^{\infty}
\left(K , 
L^1
\left(
\R^2
\right)\right)
}
 \leq 
C 
\|f^\eps\|_{L^{\infty}
\left(K , 
W^{2,1}
\left(
\omega_q
\right)\right)
} ,
\]
for $q$ great enough. Then we apply the previous estimates to the relation
\[
\partial_t f^\eps = \scA^\eps f^\eps .
\]
\end{proof}

\end{document}